\documentclass{amsart}
\usepackage{amssymb,amsmath,amsthm}
\usepackage[shortlabels]{enumitem}
\usepackage{comment}

\newtheorem{theorem}{Theorem}[section]

\newtheorem{lemma}[theorem]{Lemma}
\newtheorem{proposition}[theorem]{Proposition}
\newtheorem{definition}[theorem]{Definition}

\newtheorem{fact}[theorem]{Fact}

\theoremstyle{definition}
\newtheorem{remark}[theorem]{Remark}

\newcommand{\N}{\mathbb{N}}
\newcommand{\Z}{\mathbb{Z}}

\newcommand{\T}{\mathbb{T}}
\newcommand{\C}{\mathbb{C}}

\newcommand{\cU}{\mathcal{U}}

\newcommand{\ee}{\varepsilon}

\title{Selfless reduced amalgamated free products and HNN extensions}

\author[D. Gao]{David Gao}
\address{Department of Mathematical Sciences, University of California, San Diego, 9500 Gilman Dr, La Jolla, CA 92092, USA}\email{weg002@ucsd.edu}\urladdr{https://sites.google.com/ucsd.edu/david-gao}

\author[S. Kunnawalkam Elayavalli]{Srivatsav Kunnawalkam Elayavalli}
\address{\parbox{\linewidth}{Department of Mathematics, University of Maryland, College Park, \\
		4176 Campus Dr, College Park, MD 20742}}
\email{sriva@umd.edu}
\urladdr{https://sites.google.com/view/srivatsavke}

\author[G. Patchell]{Gregory Patchell}
\address{\parbox{\linewidth}{Mathematical Institute, University of Oxford, Andrew Wiles Building, \\ Radcliffe Observatory Quarter, Woodstock Road, Oxford, OX2 6GG, UK}}
\email{greg.patchell@maths.ox.ac.uk}
\urladdr{https://sites.google.com/view/gpatchel}

\author[L. Teryoshin]{Lizzy Teryoshin}
\address{Department of Mathematical Sciences, University of California, San Diego, 9500 Gilman Drive \#0112, La Jolla, CA 92093, USA}
\email{eteryoshin@ucsd.edu}

\begin{document}

\begin{abstract}
We find a general family of selfless inclusions in reduced amalgamated free products of $C^*$-algebras. Apart from generalizing prior works due to McClanahan, Ivanov and Omland, our work yields a few other applications. We present a short new approach to construct HNN extensions of $C^*$-algebras and find several new examples of selflessness using this. This generalizes results of Ueda, Ivanov and de la Harpe--Pr\'{e}aux. As another application our work yields a short proof of selflessness for arbitrary graph products of $C^*$-algebras over graphs of more than 2 vertices and diameter greater than 3.

\end{abstract}
	\maketitle

\section{Introduction}

The amalgamated free product construction is of central importance in modern combinatorial and geometric group theory. Together with the Higman--Neumann--Neumann (HNN) extension construction \cite{HNNpaper}, these form the two pillars of Bass--Serre theory, which fully clarifies the structure of groups acting by isometries on trees \cite{Ser80}. Several natural families of groups are built from these constructions, including relatively hyperbolic groups, several linear groups, 3-manifold groups, Artin/Coxeter groups, acylindrically hyperbolic groups \cite{MO15}, etc. These constructions have also played an important role in fundamental embedding theorems in combinatorial group theory \cite{higman1961subgroups}. 

 In the last 50 years, the free product construction and its generalizations have gained significant prominence in the study of operator algebras \cite{voiculescu1992free}. Importantly, Voiculescu defined amalgamated free products naturally via operator valued Fock-space representations \cite{voiculescu1985symmetries}. This construction produces relative freeness phenomena with the flexibility of allowing additional constraints. Over the years, this has proved to be of outstanding utility: in free probability theory \cite{speicher1998combinatorial}, subfactor theory \cite{Popa1993}, deformation/rigidity theory \cite{IPP08, CartanAFP}, non-commutative $L^p$-space theory \cite{KhinRX, mei}, continuous model theory \cite{exoticCIKE}, etc. More recently, Ueda also introduced HNN extensions in operator algebras \cite{Ueda2005HNN, Ueda2008Remarks} and established basic properties including universality theorems. Despite the fact that HNN extensions are proved therein to be compressions of certain amalgamated free products, there is independent value to study such algebras \cite{fima2012hnn, gao2025conjugacy}. 

Within the context of $C^*$-algebras, the foundational works \cite{Avitzour, Dykemasimplicitystablerankonefree, stablerankHaagerup, dykema1998projections}  studied simplicity, monotraciality, strict comparison and stable rank one among certain families of reduced free products. The study of simplicity in reduced amalgamated free products was first carried out in the work of McClanahan \cite{McClanahan1994}, where several examples were obtained. A more substantial treatment of simplicity and tracial structure was provided by Ivanov in \cite{ivanov2007structurereducedamalgamatedfree}. Ueda obtains simplicity for various HNN extensions \cite{Ueda2008Remarks}, and several other substantial collections of examples arising from groups are presented in \cite{HarpePreaux2011, IvanovOmland2017, Bryder_Ivanov_Omland_2020}. Despite these results on the structure of reduced amalgamated free products and HNN extensions, the scope of the results remains limited. For instance, the literature at the moment lacks general results on stable rank 1 or strict comparison for such algebras. In this article we will address this in a streamlined manner and prove general results concerning these families of $C^*$-algebras.

We build on the emerging landscape of selfless $C^*$-algebras. This notion was introduced by Robert in \cite{robertselfless} as a unifying perspective encompassing simplicity, unique-trace, stable rank 1, and, importantly, strict comparison in $C^*$-algebras. The effectiveness and breadth of this approach was discovered in \cite{amrutam2025strictcomparisonreducedgroup} wherein several natural classes of reduced group $C^*$-algebras were proved to be selfless. This resolved Blackadar's long-standing  strict comparison problem for $C^*_r(\mathbb{F}_2)$ (\cite{Blackadar_1989}) and opened the doors to several developments that followed: the resolution of the $C^*$-algebraic Tarski problem \cite{elayavalli2025negativeresolutioncalgebraictarski}; selflessness for free semicircular $C^*$-algebras and more free products \cite{HKER, elayavalli2025remarks}; strict comparison for families of twisted group $C^*$-algebras \cite{raum2025strictcomparisontwistedgroup, flores2026purenessstablerankreduced}; $C^*$-selflessness for linear groups \cite{vigdorovich2025structuralpropertiesreducedcalgebras, ozawa2025proximalityselflessnessgroupcalgebras, vigdorovich2026selflessreducedcalgebraslinear, avni2025mixedidentitieslineargroups}; selflessness for groups with extreme-boundaries and tensor products without RD assumptions \cite{ozawa2025proximalityselflessnessgroupcalgebras}; $C^*$-selflessness for acylindrically hyperbolic groups \cite{amrutam2025strictcomparisonreducedgroup, BradfordSisto2026, ozawa2025proximalityselflessnessgroupcalgebras, yang2025extreme}; selflessness for graph products and free products without RD assumptions \cite{fmmm2025selflessfreeprod}; and selfless inclusions and selflessness for the free unitary quantum group \cite{hayes2025selfless}. Our first main result in this paper is the proof of selflessness for a general family of amalgamated free products, stated in the stronger setup of selfless inclusions.

\begin{theorem}\label{thm: main thm AFP}
    Let $(A_i,\rho_i)$ for $i=1,2$ be $C^*$-algebras which contain a common subalgebra $B$ with state-preserving expectations $E_i$. Let $E$ denote the induced expectation from $A=A_1*_BA_2$ onto $B$. Suppose that there is a unitary $a$ in the centralizer of $\rho_1$ and unitaries $b,c$ in the centralizer of $\rho_2$ such that
    \begin{enumerate}
        \item $E(a) = E(b)=E(c)=E(c^*b)=0$;
        \item $a^*Ba$ and $B$ are orthogonal in $(A_1,\rho_1)$.
    \end{enumerate}
    Then $C^*(a,b,c) \subset A_1*_BA_2$ is selfless. In particular, $A_1*_B A_2$ is selfless.
\end{theorem}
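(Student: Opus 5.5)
We reduce to the criterion for selfless inclusions from the recent literature (\cite{robertselfless}, \cite{hayes2025selfless}). An inclusion $D\subset A$ with faithful state $\varphi$ is selfless if the embedding $D \to (A,\varphi)*(C^*_r(\Z),\tau)$, $x\mapsto x$, is approximately implemented inside an ultrapower of $A$. Concretely, it suffices to find a Haar unitary $u$ in $A^\cU$ that is freely independent from $D$ with respect to $\varphi^\cU$, with $u$ in the centralizer. Here $D=C^*(a,b,c)$ and $\varphi=\rho\circ E$. The centralizer hypothesis on $a,b,c$ makes $\varphi$ tracial on the relevant words, so "Haar unitary free from $D$" is a moment computation. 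The plan is to build $u$ as a limit of elements of $D$ itself, namely long alternating words in $a$ and $b,c$.

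\textbf{Step 1.} Set $w_n = (ab)^{n}$, or, to break correlations, $w_n=a b a c a b a c\cdots$ with a carefully chosen pattern of $b$'s and $c$'s of length growing in $n$. By hypothesis (1), $a\in A_1\ominus B$ and $b,c,c^*b\in A_2\ominus B$. Hypothesis (2) gives $E(a^* x a)=0$ for $x\in B$ with $\rho_1(x)$-orthogonality; more usefully, $a^*Ba\perp B$ ensures that $a^*xa$ has no $B$-component after centring. Hence products such as $a x a^*$ and $a^* x a$, with $x\in B$, reduce to reduced words.

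\textbf{Step 2.} Show the sequence $(w_n)$ defines a Haar unitary $u\in D^\cU$ with $\varphi^\cU(u^k)=0$ for $k\neq 0$. When computing $w_n^k$ the cancellations occur only at junctions. At these junctions $c^*b$ or $b^*c$ appear, which lie in $A_2\ominus B$, and $a^*a=1$ occurs only where the pattern forces it. Choosing the $b/c$ patterns so that $w_n^k$ remains a reduced word of length $\to\infty$ gives $E(w_n^k)=0$.

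\textbf{Step 3.} Prove asymptotic freeness: for $d_0,\dots,d_m\in D$ with $\varphi(d_j)=0$ and $k_j\ne0$, show $\varphi(d_0 w_n^{k_1} d_1\cdots)\to 0$. By density, take each $d_j$ to be a reduced word in $a,b,c$ of bounded length. The key point is that $d_j$ has bounded length $L$, while $w_n^{k}$ has length $\gg L$, and its first and last $L$ letters form a fixed "rigid" pattern. One checks that the cancellation between $d_j$ and the end of $w_n^{\pm1}$ can absorb at most $L$ letters. This works if $w_n$ is built as a small-cancellation-type word, using the two distinct letters $b,c$ in the $A_2$ positions so that no prefix of $w_n$ equals an inverse suffix of a short word. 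Leftover terms either cancel fully or leave a reduced word with a nonzero-length $w$-block. Here hypothesis (2) handles the $B$-valued remainders: any $B$-element squeezed between $a^*$ and $a$ stays orthogonal to $B$. The $E$-expectation of the total is then $0$, up to errors controlled by $\varphi(d_j)=0$ and the fact that $E(d_j)$ can be nonzero but lies in $B$.

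\textbf{Main obstacle.} The main obstacle is Step 3: controlling $d_j$ with $E(d_j)\neq0$ but $\varphi(d_j)=0$. Elements of $B$ do not commute with $a,b,c$, so one cannot simply project to words. I would first decompose $d_j=E(d_j)+(d_j-E(d_j))$. The $B$-part lands between two $w$-blocks as $\cdots a\, x\, a^*\cdots$, or in the $b$-slot as $b^* x c$. I would then try to use hypothesis (2), together with $E(c^*b)=0$, to show these products stay in the $B$-orthogonal complement. This reduces the expectation to a sum of reduced words of positive length, each with $E=0$. If this fails in general, the fallback is to replace $w_n$ by an $\ell^2$-averaged version, $\frac1{\sqrt N}\sum$ of distinct long words, and get the vanishing only in the ultralimit.

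Once Steps 1–3 give $u$, selflessness of $D\subset A$ follows from the criterion. Selflessness of $A$ follows because a selfless inclusion into $A$ implies $A$ is selfless.
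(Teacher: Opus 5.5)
There are two genuine gaps, both in what Step~3 is meant to deliver.

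\textbf{The definition is reversed.} Selflessness of $D=C^*(a,b,c)\subset A$ requires a Haar unitary $u\in D^\cU$ that is $*$-free from \emph{all of} $A$ with respect to $\varphi^\cU$. It does not ask for a unitary that is free from $D$. Your Step~3 only tests mixed moments against $d_j\in D$. Even if it succeeded, it would at best show that $D$ itself is selfless. That gives nothing for $D\subset A$ or for $A$: compare $D\otimes 1\subset D\otimes C(X)$, where $D$ may be selfless but the non-simple ambient algebra is not.

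To test against all of $A$ you must control arbitrary reduced words. Their letters are arbitrary elements of $A_1\ominus B$ and $A_2\ominus B$, and you must also handle arbitrary elements of $B\ominus\C$. This is exactly where the paper spends hypothesis (2). Conjugating a reduced word by $(ac)^Na$ with $N$ large turns it into a linear combination of two kinds of terms: reduced words beginning and ending in $A_1\ominus B$, and powers $(ca)^n$ with $n\neq0$. The choice of $w_n$ also matters more than your sketch suggests. Your first candidate $w_n=(ab)^n$ commutes with the centred unitary $(ab)^{-1}\in D$, so it is not asymptotically free even from $D$.

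\textbf{Moments cannot suffice.} This gap is more fundamental. The definition requires $C^*(A,u)\subset A^\cU$ to be \emph{canonically isomorphic} to the reduced free product $A*_rC(\T)$. Since $\varphi^\cU$ need not be faithful on $A^\cU$, freeness in distribution only identifies the GNS quotient of $C^*(A,u)$ with $A*_rC(\T)$. You still need the norm inequality $\|p(u)\|_{A^\cU}\le\|p(z)\|_{A*_rC(\T)}$ for every $*$-polynomial $p$ with coefficients in $A$, where $z$ generates $C(\T)$. Equivalently, you need operator-norm control of $\lim_{n\to\cU}\|p(w_n)\|$. Nothing in your plan addresses this, and it is the analytic heart of the matter. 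In the group case this is where rapid decay, or Ozawa's PHP, enters.

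\textbf{How the paper handles it.} The paper never computes moments. It verifies the PHP property for $C^*(a,b,c)\subset A$ with $n=3$, using:
\begin{enumerate}
\item the projections $P_i$ onto the closed right $B$-modules spanned by reduced words beginning with $w_i=(ba)^ica(ba)^{3-i}$;
\item the subprojections $P_i^+$ for the longer prefix $w_iba$;
\item the unitaries $u_i=w_iba\,c^{-1}w_i^*\in C^*(a,b,c)$.
\end{enumerate}
Orthogonality of the $P_i$ comes from $E(c^*b)=0$. The ping-pong inequality comes from the reduced-word structure. The identity $P_jxP_i=0$ comes from the normal form obtained by the conjugation above. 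Theorem~\ref{thm: php-implies-selfless} then supplies exactly the norm estimate your approach lacks.
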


    Note that one cannot hope for amalgamated free products to be selfless in essentially full generality, as is the case in free products. Selfless $C^*$-algebras are simple by \cite{robertselfless}. Taking $B=C(X)$ and $A_1,A_2 \cong B\otimes C(X)$, $B$ will be central in $A_1*_BA_2$ and so the algebra will be neither simple nor monotracial. It is not clear if non-simplicity and existence of multiple traces are the only reasonable obstructions to selflessness in this setting, as is the case in free products. However, one may still hope that some classes of non-simple or non-monotracial amalgamated free products still are pure or have strict comparison.

    The proof of the above theorem follows the roadmap established in \cite{hayes2025selfless} using a version of Ozawa's PHP property in the $C^*$-algebraic setting. In order to fully exploit the paradoxical decompositions that naturally arise via word decompositions in amalgamated free products, we need to impose some non-triviality assumptions on the inclusion of the amalgam. Apart from natural analogues of Avitzour's conditions \cite{Avitzour}, we additionally need orthogonality relations mimicking the behavior of ``weak-malnormality'' ($H<G$ is said to be weakly malnormal if there exists $g\in G$ such that $gHg^{-1}\cap H$ is trivial). In the context of groups, by results of \cite{MO15}, it is known that amalgamated free products are acylindrically hyperbolic whenever the amalgam is weakly-malnormal, and are therefore selfless by \cite{ozawa2025proximalityselflessnessgroupcalgebras, yang2025extreme}. Hence our non-triviality conditions are natural and well motivated. A natural example where our theorem directly applies is in the case of graph products. Recall that the diameter of a simplicial graph is defined as $D(\Gamma)= \sup_{u,v\in V(\Gamma)}d(u,v)$. If the diameter is larger than 2, it is clear that the graph is \emph{irreducible}; i.e., it does not split as the join of two non trivial subgraphs. Selflessness of certain graph products of $C^*$-probability spaces over irreducible graphs was obtained in \cite{fmmm2025selflessfreeprod}. Our Theorem A allows us to obtain a very short proof of selflessness for graph products under the slightly stronger assumption of the diameter being larger than 3 (and the number of vertices being larger than 2) via an amalgamated free product decomposition. Indeed, the diameter being greater than 3 allows for the necessary weak malnormality condition to be satisfied. 

\begin{theorem}
    Let $\Gamma$ be a finite simplicial graph which has more than 2 vertices and whose diameter is greater than 3, and for each $v\in V(\Gamma)$ let $(A_v,\rho_v)$ be $C^*$-probability spaces admitting unitaries $u_v\in A_v$ in the centralizer of $\rho_v$ satisfying $\rho_v(u_v)=0$. Then the graph product $\star_{v\in \Gamma}A_v$ is selfless.
\end{theorem}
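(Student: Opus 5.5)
The plan is to reduce the graph product theorem to Theorem \ref{thm: main thm AFP} through an amalgamated free product decomposition. Because $D(\Gamma)>3$, there are vertices $v,w$ with $d(v,w)\ge 4$. Write $\mathrm{lk}(v)$ for the set of neighbours of $v$ and $\mathrm{st}(v)=\{v\}\cup\mathrm{lk}(v)$. Graph products have the standard splitting
\[
\star_{\Gamma}A_u \;\cong\; \Big(\star_{\mathrm{st}(v)}A_u\Big)\ast_{\star_{\mathrm{lk}(v)}A_u}\Big(\star_{\Gamma\setminus\{v\}}A_u\Big),
\]
and it is compatible with the graph product states and the canonical conditional expectations. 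I would take this as known, since it is the $C^*$-probabilistic analogue of the group-theoretic splitting. So set $A_1=\star_{\mathrm{st}(v)}A_u = A_v\otimes \star_{\mathrm{lk}(v)}A_u$, $A_2=\star_{\Gamma\setminus\{v\}}A_u$ and $B=\star_{\mathrm{lk}(v)}A_u$. The final statement then follows from the ``in particular'' clause of Theorem \ref{thm: main thm AFP}, once we verify conditions (1) and (2).

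\emph{Choice of $a$.} The obvious candidate is $a=u_v$, but it commutes with $B$, so $a^*Ba=B$ and condition (2) fails. I therefore need a reduced word that moves off the star. Since $d(v,w)\ge 4$, pick a vertex $x\neq v$ with $x\notin \mathrm{lk}(v)$; one with $d(v,x)=2$ exists on a geodesic from $v$ to $w$. The problem is that $x\notin\mathrm{st}(v)$, so $u_x$ does not lie in $A_1$.

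I would therefore swap the roles of the two factors. The main theorem needs the orthogonality hypothesis only on one side, and in the proof we may relabel, so put the orthogonality on the $A_2$-side. Concretely, take $a=u_v\in A_1$; this is a centralizer unitary with $E(a)=E_{B}(u_v)=\rho_v(u_v)=0$. In $A_2$, take
\[
b=u_x,\qquad c=u_y u_x u_y^{*}\quad\text{(or a similar word)},
\]
with $x,y$ chosen in $\Gamma\setminus\{v\}$ far from $\mathrm{lk}(v)$. The hypothesis $D(\Gamma)>3$, together with having more than 2 vertices, supplies such vertices: take $y=w$ with $d(v,w)\ge4$, so that $\mathrm{st}(w)\cap \mathrm{st}(v)=\emptyset$, and take $x$ to be $w$ itself or a vertex adjacent to $w$.

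\emph{Conditions (1) and (2).} Condition (1) is the freeness computation $E(b)=E(c)=E(c^*b)=0$. One checks in the graph product Fock model that reduced words containing letters from $\Gamma\setminus\mathrm{st}(v)$ with trace zero have zero expectation onto $B$. Condition (2) becomes the statement that $u_w^*Bu_w\perp B$ in $L^2(A_2,\rho_2)$. Here $B$ is generated by letters from $\mathrm{lk}(v)$, and no vertex of $\mathrm{lk}(v)$ is adjacent to $w$ or equal to $w$, because $d(v,w)\ge4>2$. Hence for centered reduced words $\beta,\beta'$ in $B$, the word $u_w^*\beta u_w$ is reduced and cannot shuffle its $w$-letters together. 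This gives $\rho_2(\beta'^*u_w^*\beta u_w)=\rho(\beta'^*)\rho(\beta)$, i.e.\ orthogonality of the centered parts, which is the precise sense needed. If Theorem \ref{thm: main thm AFP} is only available with the fixed roles stated, I would instead attach the large star to $w$: take $A_1=\star_{\Gamma\setminus\{w\}}$, $A_2=\mathrm{st}(w)$, $B=\mathrm{lk}(w)$ (or the reverse), and choose $a$ accordingly.

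\emph{Main obstacle.} The hard step is matching the theorem's asymmetric roles: $a$ with orthogonality on the $A_1$-side, and $b,c$ with $E(c^*b)=0$ on the $A_2$-side. I need the correct splitting in which the side carrying $a$ is the big complement $\star_{\Gamma\setminus\{v\}}$, and this is exactly where the diameter $>3$ enters. So I would take $A_1=\star_{\Gamma\setminus\{v\}}A_u$ with $a=u_w$, where the orthogonality was verified above. I would take $A_2=A_v\otimes B$ with $b=u_v$ and $c$ a second centred unitary. Requiring $E(c^*b)=0$ with $b,c\in A_v\otimes B$ suggests $c=u_v^*$ if $\rho_v(u_v^2)=0$. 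In general I would use $c=u_v z$ with $z$ a centred unitary in some $A_u$, $u\in\mathrm{lk}(v)$, or $c=u_v^{*}$ combined with the more-than-2-vertices hypothesis. Making this choice work when $\mathrm{lk}(v)$ might be empty is where I expect the real fiddling.
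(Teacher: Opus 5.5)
\textbf{There is a genuine gap.} It lies in the choice of splitting, not in your individual verifications.

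In the star splitting $\star_\Gamma A_u\cong(A_v\otimes B)\ast_B(\star_{\Gamma\setminus\{v\}}A_u)$ with $B=\star_{\mathrm{lk}(v)}A_u$, the factor $A_v\otimes B$ cannot play either role in Theorem~\ref{thm: main thm AFP} in general. The reason is that $A_v$ commutes with $B$, and the expectation on that factor is $E_B=\rho_v\otimes\mathrm{id}$. Take every vertex algebra to be $\C^2$ with the uniform state and distinguished unitary $(1,-1)$. These are graph products of $\Z/2\Z$, i.e.\ right-angled Coxeter groups, and they satisfy all the hypotheses.
\begin{itemize}
\item \emph{Putting $b,c$ on the star side.} Here $A_v\otimes B\cong B\oplus B$, and every unitary with $E_B=0$ has the form $(\beta,-\beta)$ with $\beta\in B$ unitary. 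For $b=(\beta,-\beta)$ and $c=(\gamma,-\gamma)$ you get $E_B(c^*b)=\gamma^*\beta\neq0$. So the $b,c$ required by your final assignment do not exist. This is an obstruction, not fiddling, and your own candidates show it: $c=u_vz$ gives $E_B(c^*b)=z^*$, and $c=u_v^*$ needs $\rho_v(u_v^2)=0$, which is not assumed.
\item \emph{Putting $a$ on the star side.} Then $a=(\beta,-\beta)$ and $a^*Ba=\beta^*B\beta=B$, which is not orthogonal to $B$ once $\mathrm{lk}(v)\neq\emptyset$.
\item \emph{The intermediate relabelling.} This is not licensed. The theorem needs $a^*Ba\perp B$ for the same unitary $a$ that sits alone on its side. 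Moreover, your choice $b=u_x$, $c=u_wu_xu_w^*$ with $x\in\{w\}\cup\mathrm{lk}(w)$ collapses to $c=b$, since the letters commute, so $E(c^*b)=1$.
\end{itemize}
Note also that your (correct) check of $u_w^*Bu_w\perp B$ only needed $d(v,w)\ge3$. That is a sign that $D(\Gamma)>3$ is meant for a different splitting.

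\textbf{What is missing.} The paper only sketches this result: an amalgamated free product decomposition in which $D(\Gamma)>3$ gives weak malnormality of the amalgam. So your overall strategy is the intended one. What you need is a splitting in which \emph{both} factors contain vertices outside the amalgam. For example, choose $v,w$ with $d(v,w)\ge4$ and $\mathrm{lk}(v)\neq\emptyset$, and put $S=\{u:d(u,v)=2\}$. No edge joins $\{d(\cdot,v)\le1\}$ to $\{d(\cdot,v)\ge3\}$, so the same standard splitting over a separating full subgraph gives
\[
\star_\Gamma A_u\cong\Big(\star_{\{d(\cdot,v)\ge2\}}A_u\Big)\ast_{\star_SA_u}\Big(\star_{\{d(\cdot,v)\le2\}}A_u\Big).
\]
Take $a=u_w$ in the first factor, and $b=u_v$, $c=u_x$ with $x\in\mathrm{lk}(v)$ in the second.
\begin{itemize}
\item Since $v,x\notin S$, the words $u_v$, $u_x$ and $u_x^*u_v$ have zero expectation onto $\star_SA_u$, which is condition (1).
\item Since $d(v,w)\ge4$, the vertex $w$ is neither in $S$ nor adjacent to any vertex of $S$. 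Hence $(\beta')^*u_w^*\beta u_w$ is a reduced word for centred $\beta,\beta'\in\star_SA_u$, which is condition (2). This is exactly where the diameter hypothesis enters.
\item If $\Gamma$ is disconnected, take $v$ in a component containing an edge and $w$ in another component.
\item If $\Gamma$ has no edges, it is a free product of at least three algebras, and $A_1=A_{v_1}$, $a=u_{v_1}$, $b=u_{v_2}$, $c=u_{v_3}$ works. This is where the hypothesis of more than two vertices is used.
\end{itemize}
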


Our next main result is about selflessness for reduced HNN extensions. Motivated by the HNN extension construction in group theory, Ueda introduced the corresponding construction in the operator algebras context. We recall that the purpose of this construction is to embed the original algebra into a larger algebra where a given pair of embeddings of a fixed subalgebra are precisely unitarily conjugated. Ueda's definition is quite technical and yields a particular compression of an amalgamated free product. Here we present a fresh and rather viable approach to construct the reduced HNN extension. Our approach is inspired by the work \cite{gao2025conjugacy}. More precisely, given a $C^*$-probability space $(A, \rho)$ and a pair of subalgebras $B_1,B_2$ with expectation that are state-preservingly isomorphic via $\theta:B_1\to B_2$, we construct $\mathrm{HNN}(A,\theta, B_1,B_2)$ as a \emph{unital subalgebra} of an amalgamated free product. Using this and the methods behind Theorem \ref{thm: main thm AFP}, we are able to prove the following result.

\begin{theorem}
    Let $B_1,B_{-1}\subset (A,\rho)$ be subalgebras with expectations $E_1,E_{-1}$ and let $\theta:B_1\to B_{-1}$ be a state-preserving isomorphism. Let $a\in A\ominus B_1$ be a unitary centralizing $\rho.$ Suppose that $a^*B_1a\perp B_{-1}$. Then $C^\ast(a, w) \subset \mathrm{HNN}(A,\theta,B_1,B_{-1})$ is selfless. In particular, $\mathrm{HNN}(A,\theta,B_1,B_{-1})$ is selfless.
\end{theorem}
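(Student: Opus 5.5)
The plan is to reduce the statement to Theorem~\ref{thm: main thm AFP}, or more precisely to the argument behind it, by realizing $\mathrm{HNN}(A,\theta,B_1,B_{-1})$ as a unital subalgebra of an amalgamated free product. Following the construction hinted at in the introduction, set $\tilde A = A\otimes M_2$ or, more economically, take $A_1 = A *_{B_1} (B_1\otimes C^*(\mathbb{Z}))$-type data. Concretely, I would use $\mathcal{A}_1 = A$ and $\mathcal{A}_2 = M_2(B_1)$, amalgamated over $B_1\oplus B_{-1}$. Here $B_1\oplus B_{-1}$ sits diagonally in $M_2(A)$ with the normalized state, and in $M_2(B_1)$ via $\mathrm{id}\oplus\theta^{-1}$. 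The matrix unit $e_{21}$ of the second factor, composed with the identification, produces a partial isometry. The stable letter $w$ is then the unitary implementing $\theta$, namely $w b w^* = \theta(b)$ for $b\in B_1$. The HNN extension is the corner or unital subalgebra generated by $A$ and $w$ inside $M_2(A)*_{B_1\oplus B_{-1}} M_2(B_1)$. I take as given that this is how the paper defines it, with the conditional expectation onto $B_1$ restricting the free product expectation.

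Rather than verifying the orthogonality hypotheses of Theorem~\ref{thm: main thm AFP} in this matrix picture, which is awkward because $w$ is not literally in a factor, I would rerun its proof directly inside $\mathrm{HNN}$ using reduced words. Every element of $C^*(a,w)$ is approximated by words alternating between letters of $A$ and powers $w^{\pm1}$. A word is reduced when any $w^{\epsilon}xw^{-\epsilon}$ has $x\in A\ominus B_{\epsilon}$, with the appropriate sign convention. The expectation onto $B_1$ kills every nontrivial reduced word; this is a Britton-lemma freeness statement, and I would derive it from the freeness in the ambient amalgamated free product. The role of $a,b,c$ in Theorem~\ref{thm: main thm AFP} is played by $a$ together with the unitaries $w$ and $waw^*$, or $w$ and $w^2$. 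The point to check is the analogue of $E(c^*b)=0$: here $w^{-1}w^2=w$ has zero expectation.

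The key step is the PHP/paradoxical-decomposition argument, following \cite{hayes2025selfless}. I would build a unitary $u\in C^*(a,w)$, for example $u=awa$ or $u = w a w a^{*}$, whose powers $u^n$ are reduced words. I would then partition the Hilbert space $L^2(\mathrm{HNN},\rho)$ into subspaces spanned by reduced words according to their first letters, so that $u$ and a second unitary $v$ (built from $w^{-1}$) act as a ping-pong system. This gives the approximate-free-semicircular or proximality input needed to conclude selflessness of the inclusion. The hypothesis $a^*B_1a\perp B_{-1}$ is exactly what guarantees that $w^{-1}a^*\,b\,a w$-type products remain reduced, i.e.\ that no cancellation occurs when $w^{\pm1}$ meets $a B a^*$. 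It plays the same role as condition (2) in Theorem~\ref{thm: main thm AFP}. Since $a$ centralizes $\rho$, the word length estimates are isometric in $L^2$, and the centralizer assumption on $w$ holds automatically because $w$ preserves the state by construction.

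The main obstacle I expect is the bookkeeping of reducedness. Specifically, I must show that the subspaces indexed by the two ``directions'' are mutually orthogonal and mapped correctly by $u,v$, accounting for both signs $\epsilon=\pm1$ and both subalgebras $B_{\pm1}$. I would handle this by proving a lemma that $\rho(x_0 w^{\epsilon_1}x_1\cdots w^{\epsilon_n}x_n)=0$ for reduced words, via the embedding into the amalgamated free product, where it becomes an alternating centered word. With that lemma in hand, the remainder is identical to the proof of Theorem~\ref{thm: main thm AFP}. Selflessness of the whole HNN extension then follows because a selfless unital subalgebra containing the generators forces selflessness of the ambient algebra, by the definition of selfless inclusion.
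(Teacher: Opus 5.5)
\textbf{There is a genuine gap.} Your overall architecture matches the paper's. You obtain ``reduced words have state zero'' from an amalgamated free product model, then verify PHP inside $\mathrm{HNN}(A,\theta,B_1,B_{-1})$ by ping-pong on spans of reduced words. (The paper's model is the unital subalgebra generated by $A\otimes 1$ and $w=uv$ in $((A\otimes A)\rtimes\Z/2\Z)*_{B_1\otimes B_{-1}}((B_1\otimes B_{-1})\rtimes\Z/2\Z)$, not the $M_2$ corner you sketch.) The gap is that you never specify $P_i,P_i^+,u_i$, and you assert that the rest is ``identical to the proof of Theorem~\ref{thm: main thm AFP}''. That assertion is false. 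Here $w^{\pm1}$ is not a letter of a free factor, and the hypotheses give $E_1(a)=0$ but not $E_{-1}(a)=0$.

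Consider the literal transcription $b\leftrightarrow w^{-2}$, $c\leftrightarrow w^{-1}$, namely $v_i=(w^{-2}a)^iw^{-1}a(w^{-2}a)^{3-i}$, $v_i^+=v_iw^{-2}a$, $u_i=v_i^+wv_i^{-1}$. It violates PHP condition (2). For $\xi\perp K_{v_i}$, the vector $u_i\xi$ contains the junction $w^{-1}aw$, whose $E_{-1}(a)$-part collapses through $\theta^{-1}$ and leaves $K_{v_i^+}$. Concretely, take $A=C^*_r(\Z^2)$, $B_1=C^*_r(\Z\times 0)$, $B_{-1}=C^*_r(0\times\Z)$, $\theta$ induced by $(n,0)\mapsto(0,n)$, and $a=\lambda(0,1)$; all hypotheses hold. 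Then $v_iw^2\perp K_{v_i}$, but $u_i(v_iw^2)=v_iw^{-1}\lambda(1,0)w^2\perp K_{v_i^+}$.

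The paper's remedy is $u_i=v_i^+(aw)^{-2}w(aw)^2v_i^{-1}$, together with the splitting $a^*=(a^*-E_{-1}(a^*))+E_{-1}(a^*)$. The collapsed piece now yields $a^*\theta^{-1}(E_{-1}(a^*))a$, which is centered and therefore orthogonal to $B_{-1}$ by $a^*B_1a\perp B_{-1}$. Hence both pieces are reduced words beginning with $v_i^+$. This is the essential use of the orthogonality hypothesis in the paradoxical step, and nothing in your proposal supplies it.

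Two further points. First, your proposed analogue of $E(c^*b)=0$ (that $w^{-1}w^2=w$ has zero expectation) is not the mechanism. The branches $K_{w^{-2}a\cdots}$ and $K_{w^{-1}a\cdots}$ are orthogonal because, after stripping a common $w^{-1}$, one compares reduced words beginning $1\cdot w^{-1}$ and $a\,w^{-1}$, and $E_1(a)=0$. With positive powers $w,w^2$ the analogous comparison produces the junction $w^{-1}a^*w$, which is not reduced unless $E_{-1}(a)=0$. So the signs must be matched to the one-sided hypothesis $a\in A\ominus B_1$.

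Second, and more importantly, a ping-pong between two unitaries never addresses PHP condition (3): $\|P_ixP_j\|<\varepsilon$ for $x$ in a finite $F\subset\ker\rho$ of the whole HNN algebra, not merely of $C^*(a,w)$. The paper handles this by replacing $F$ with $(aw)^NF(aw)^{-N}$ for large $N$, again using $a^*B_1a\perp B_{-1}$. Then each element is a combination of reduced words beginning with $aw$ and ending with $w^{-1}a^{-1}$, or of $(aw)^n$ with $n\neq0$, and the paper checks case by case that $xv_i\eta\perp v_j\eta'$. Without explicit PHP data, the modified $u_i$, and this normal-form reduction, your proposal is a strategy rather than a proof.
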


Before we conclude the introduction, we would like to remark on prior results around these considerations. As mentioned before the earliest results on the structure of reduced amalgamated free products of $C^*$-algebras are found in McClanahan's work, which addresses simplicity. However, the conditions imposed by McClanahan force the Avitzour unitaries to commute with the amalgam, a constraint which does not appear in our results. The next work of Ivanov \cite{ivanov2007structurereducedamalgamatedfree} concerning amalgamated free products is very relevant to our results. The hypotheses on the amalgam in Ivanov's simplicity results involve malnormality much like ours (see Corollary 4.6 \cite{ivanov2007structurereducedamalgamatedfree}). The work of Ivanov and Omland additionally discusses optimal conditions for simplicity in amalgamated free products \cite{IvanovOmland2017} (see also \cite{Bryder_Ivanov_Omland_2020} for other examples). While our work does not obtain optimal conditions for selflessness, it is very plausible that such conditions exist and we leave it to future explorations. Additionally, we point out that in line with the discussion before Theorem 4.9 in \cite{ivanov2007structurereducedamalgamatedfree}, our results give a complete characterization of $C^*$-selflessness for Baumslag--Solitar groups, namely, such a group is $C^*$-selfless iff it is $C^*$-simple. However, this result would also follow from Ozawa's result on $C^*$-selflessness of groups admitting topologically free extreme boundaries \cite{ozawa2025proximalityselflessnessgroupcalgebras}.

\subsection*{Acknowledgements}
This work was done during the Brin Mathematics Research Center workshop ``Recent Developments in Operator Algebras'' in February 2026. We thank the center for its hospitality. We are also grateful to L. Robert for his helpful comments and encouragement.  The third author was supported by the Engineering and Physical Sciences Research Council (UK), grant EP/X026647/1. 

\subsection{Open Access and Data Statement}

For the purpose of Open Access, the authors have applied a CC BY public copyright license to any Author Accepted Manuscript (AAM) version arising from this submission. Data sharing is not applicable to this article as no new data were created or analyzed in this work.

\section{Preliminaries on selfless $C^*$-algebras}

Let $B_1,B_2 \subset (A,\rho)$ be two subalgebras of a $C^*$-algebra with a trace. We say that $B_1$ and $B_2$ are \emph{orthogonal} and write $B_1\perp B_2$ if $\rho(x_1^*x_2) = 0$ whenever $x_i \in \ker(\rho)\cap B_i$ for $i=1,2.$ If $B\subset A$ is a $C^*$-subalgebra such that there is a conditional expectation $E:A\to B$, then we write $A\ominus B$ to refer to $\ker(E)$. Throughout, all tensor products are minimal and amalgamated free products are reduced.

For a $C^*$-algebra $A$ and an ultrafilter $\cU$ on a set $I$ (typically, $A$ is separable and we may take $I$ to be countable), let $A^\cU$ be the ultrapower of $A$, defined as follows.

$$A^\cU := \ell^\infty(I,A)/\{(x_n)_n : \|x_n\|\to_\cU 0\}.$$

In \cite{hayes2025selfless}, the notion of a \emph{selfless inclusion} was defined and studied. We record some pertinent facts here.

\begin{definition}
    Let $(A,\rho)$ be a $C^*$-algebra with a state $\rho.$ An inclusion $B\subset (A,\rho)$ is \emph{selfless} if there is a unitary $u\in B^\cU$ such that $C^*(A,u) \subset A^\cU$ is canonically isomorphic to $A*_r C(\T)$ and such that the free product state on $A*_r C(\T)$ is the restriction of $\rho^\cU$ to $C^*(A,u)$. We say that $(A,\rho)$ is \emph{selfless} if $A\subset (A,\rho)$ is selfless.
\end{definition}

The following is Theorem 2.5(i) of \cite{hayes2025selfless}.

\begin{proposition}
    Let $B\subset (A,\rho)$ be selfless and $B\subset C\subset A$. Then $C$ is selfless.
\end{proposition}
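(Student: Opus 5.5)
The plan is to read ``$C$ is selfless'' as the statement that the inclusion $C \subset (A,\rho)$ is selfless in the sense of the Definition above, and to reuse the unitary that witnesses selflessness of $B \subset (A,\rho)$. The definition of a selfless inclusion $D\subset (A,\rho)$ uses $D$ only through where the unitary $u$ lives, namely $u \in D^\cU$. The conditions $C^*(A,u)\cong A*_r C(\T)$ and the agreement of states refer only to $A$, $A^\cU$ and $\rho^\cU$. So it suffices to show that $B^\cU \subset C^\cU \subset A^\cU$ compatibly.

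The first step is to set up these embeddings of ultrapowers. For any $C^*$-subalgebra $D\subset A$, the inclusion $\ell^\infty(I,D)\subset \ell^\infty(I,A)$ induces a $*$-homomorphism $D^\cU \to A^\cU$. It is injective, since $\|x_n\|\to_\cU 0$ means the same thing whether the $x_n$ are regarded in $D$ or in $A$, so the norm of the class is $\lim_\cU\|x_n\|$ in both algebras. Applying this to $B\subset C\subset A$ gives isometric inclusions $B^\cU\hookrightarrow C^\cU \hookrightarrow A^\cU$. Their composite is the canonical inclusion $B^\cU\hookrightarrow A^\cU$, because all three maps are induced by inclusions of bounded sequences. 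Likewise $A\subset A^\cU$ is the diagonal embedding in every case.

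The second step is to take the unitary $u \in B^\cU$ provided by selflessness of $B\subset(A,\rho)$. By hypothesis, $C^*(A,u)\subset A^\cU$ is canonically isomorphic to $A*_rC(\T)$, and the free product state is the restriction of $\rho^\cU$. Under the embedding above, $u$ is a unitary in $C^\cU$, and it defines the same element of $A^\cU$. Hence the subalgebra $C^*(A,u)\subset A^\cU$, the isomorphism with $A*_r C(\T)$, and the state are literally unchanged, so $u$ witnesses that $C\subset(A,\rho)$ is selfless.

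There is no real obstacle here. The only point needing care is the compatibility of the ultrapower inclusions in the first step, which ensures that ``$u\in C^\cU$'' and ``$u\in A^\cU$'' refer to the same element. I would note in passing that the ``in particular'' claims of the main theorems use exactly this, with $C = A$, to deduce selflessness of $(A,\rho)$ from selflessness of a subalgebra inclusion.
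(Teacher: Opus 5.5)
Your argument is correct for the reading you chose, but that reading turns the proposition into a tautology and is not the intended one. The paper has just fixed the convention that ``$(A,\rho)$ is selfless'' means $A\subset(A,\rho)$ is selfless. The same sentence also contrasts ``$B\subset(A,\rho)$ be selfless'' with ``$C$ is selfless.'' So the conclusion is that $(C,\rho|_C)$ is selfless, i.e.\ there is a unitary $v\in C^\cU$ such that $C^*(C,v)\subset C^\cU$ is canonically isomorphic to $C*_rC(\T)$, with the free product of $\rho|_C$ and the Haar state equal to the restriction of $(\rho|_C)^\cU$. This is what makes the statement useful: intermediate algebras inherit simplicity, strict comparison, and so on. The two readings coincide only for $C=A$, the case used in the ``in particular'' clauses. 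Your closing remark is therefore right, but it does not cover general $C$.

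Your first step is still needed and is fine; you should also note that $\rho^\cU|_{C^\cU}=(\rho|_C)^\cU$. The same $u\in B^\cU\subset C^\cU$ is the right witness. What is missing is the identification of the $C^*$-subalgebra of $C^*(A,u)\cong A*_rC(\T)$ generated by $C$ and $u$ with $C*_rC(\T)$. This step is not formal. In general there is no $\rho$-preserving conditional expectation $A\to C$. You must show that elements of the algebraic free product of $C$ and $\C[u,u^*]$ have the same norm on $L^2(A,\rho)*L^2(\T)$ as on the invariant subspace $L^2(C,\rho)*L^2(\T)$.

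You can supply this in either of two ways. One is to cite the Blanchard--Dykema embedding theorem for reduced free products of subalgebras. The other is direct:
\begin{enumerate}
\item $u$ is a Haar unitary free from $C$ with respect to $\rho^\cU$, so the GNS representation of $\rho^\cU|_{C^*(C,u)}$ is the free product representation on $L^2(C,\rho)*L^2(\T)$, whose image is $C*_rC(\T)$.
\item The free product state on $A*_rC(\T)$ is faithful when $\rho$ is (Dykema), so its restriction to $C^*(C,u)$ is faithful, and hence that GNS representation is injective.
\end{enumerate}
With this step added, your proof is complete.
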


The following definition is inspired by Ozawa's work on selflessness \cite{ozawa2025proximalityselflessnessgroupcalgebras} and appears explicitly as Definition 3.1 in \cite{hayes2025selfless}.

\begin{definition}
    Let $B\subset (A,\rho)$ and assume $A\subset \mathbb B(\mathcal H)$ is a faithful representation. We say the inclusion $B\subset (A,\rho)$ has the PHP property if for all finite subsets $F\subset \ker \rho,$ $\varepsilon>0,$ and $n\in\N$ there exist, for $1\le i\le n$, $u_i$ unitaries in $B$ and $P_i^+\le P_i$ projections in $\mathbb B(\mathcal H)$ such that 
    \begin{enumerate}
        \item the family $\{P_i\}_{i=1}^n$ is pairwise orthogonal;
        \item for all $1\le i\le n$, $u_i(1-(P_i-P_i^+))u_i^* \le P_i^+$;
        \item for all $x\in F$ and $1\le i,j\le n$, $\|P_ixP_j\| < \varepsilon.$
    \end{enumerate}
\end{definition}

The following two statements appear as Lemma 3.2 and Theorem 3.3, respectively, in \cite{hayes2025selfless}.

\begin{lemma}\label{lem: php n is 3}
    In the above definition, it suffices to take $n=3$ and to take $F$ to be a finite subset of a densely spanning subset of $\ker \rho.$
\end{lemma}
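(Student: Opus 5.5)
Two reductions are needed: from arbitrary $n$ to $n=3$, and from arbitrary finite $F\subset\ker\rho$ to finite subsets of a densely spanning set $S$. We treat them separately.

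\emph{Reducing $F$.} Suppose the PHP conditions hold for every finite $F'\subset S$, and let $F\subset\ker\rho$ be finite and $\varepsilon>0$. Since $S$ densely spans $\ker\rho$, each $x\in F$ can be approximated in norm, within $\varepsilon/2$, by a finite linear combination $\sum_k \lambda_k s_k$ with $s_k\in S$. Collect all the $s_k$ used for all $x\in F$ into one finite set $F'$. Let $M$ be the largest $|\lambda_k|$-sum appearing, and apply the hypothesis to $F'$ with tolerance $\varepsilon/(2M)$. Conditions (1) and (2) do not involve $F$, so they carry over unchanged. For condition (3), the triangle inequality gives
\[
\|P_ixP_j\|\le \|x-\textstyle\sum\lambda_k s_k\| + \sum|\lambda_k|\,\|P_is_kP_j\| < \varepsilon,
\]
using $\|P_i\|,\|P_j\|\le1$.

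\emph{Reducing $n$.} Given the property for $n=3$, we want it for arbitrary $n$. Iterating the construction on nested corners runs into the problem that $u_i$ must be a unitary of $B$ acting on all of $\mathcal H$, not on a corner. So we instead use the fact that the $n=3$ data already produce a ping-pong pattern. From $u_i(1-(P_i-P_i^+))u_i^*\le P_i^+$ we get
\[
u_i^*(1-P_i^+)u_i\le P_i-P_i^+ .
\]
Hence $u_i$ maps $\mathrm{ran}(1-P_i+P_i^+)$ into $P_i^+$, and $u_i^*$ maps $\mathrm{ran}(1-P_i^+)$ into $P_i-P_i^+$.

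With the three pieces $(u_1,P_1,P_1^+)$, $(u_2,P_2,P_2^+)$, $(u_3,P_3,P_3^+)$, we build new projections by pushing $P_3$ through words in $u_1,u_2$. For $1\le k\le n$, set
\[
Q_k=u_1^{k}u_2P_3u_2^*u_1^{-k}, \qquad v_k=u_1^{k}u_2u_3u_2^*u_1^{-k}, \qquad Q_k^+=u_1^{k}u_2P_3^+u_2^*u_1^{-k}.
\]
Conjugation preserves inequalities, so condition (2) holds for each triple $(v_k,Q_k,Q_k^+)$, and each $v_k$ is a unitary in $B$.

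Pairwise orthogonality of the $Q_k$ comes from the ping-pong dynamics:
\begin{itemize}
\item $P_3\le 1-P_2$, so $u_2P_3u_2^*\le P_2^+\le 1-P_1$;
\item $u_1$ then maps $\mathrm{ran}(1-P_1)$ into $P_1^+$, and $u_1^{m}$ for $m\ge1$ sends it into nested pieces of $P_1$;
\item for $k\ne l$, applying $u_1^{-\min(k,l)}$ reduces the question to whether $u_1^{|k-l|}$ moves $\mathrm{ran}(1-P_1)$ off itself, which holds because $u_1^{m}(1-P_1)u_1^{-m}\le P_1^+\perp 1-P_1$.
\end{itemize}

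Condition (3) is the main obstacle. We need $\|Q_kxQ_l\|$ small for $x\in F$, which amounts to controlling
\[
\|P_3\,(u_2^*u_1^{-k}xu_1^{l}u_2)\,P_3\|.
\]
If $\rho$ is not invariant under these conjugations, the elements $u_2^*u_1^{-k}xu_1^{l}u_2$ need not lie in $\ker\rho$. The plan is to enlarge $F$ to the finite set $F''$ consisting of the centered parts $y-\rho(y)1$ of all such words, $y=u_2^*u_1^{-k}xu_1^{l}u_2$ with $k,l\le n$, and to choose the $n=3$ data for $F''$. However, the $u_i$ depend on $F$, so this choice is circular.

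To break the circularity, we fall back on the iterative alternative. First choose $(u_1,u_2,P_1,P_2,\dots)$ for $F$. Then choose a second $n=3$ instance with $F$ replaced by the conjugated, centered set built from the first instance's unitaries. The scalar terms $\rho(y)P_3P_3$ have to be handled separately: either the $u$'s centralize $\rho$, or we observe that they are harmless only when $k=l$, and for $k\ne l$ one shows $\rho(y)\approx0$. This bookkeeping is where I expect the real work to lie.
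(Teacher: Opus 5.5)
Your reduction from arbitrary finite $F\subset\ker\rho$ to finite subsets of the densely spanning set is correct, and so are your checks of conditions (1) and (2) for the triples $(v_k,Q_k,Q_k^+)$. The paper gives no proof of this lemma; it quotes it from Hayes--Kunnawalkam Elayavalli--Patchell--Robert, so I am judging your argument on its own.

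The genuine gap is condition (3), which you never establish. The remedies you sketch do not work. Enlarging $F$ to the centered conjugates $u_2^*u_1^{-k}xu_1^{l}u_2$ is circular, as you note yourself. The ``iterative alternative'' does not break that circularity. A second $n=3$ instance chosen for the conjugated set comes with projections that have no relation to $P_1$, $P_2$ or $u_1$. So the containments $u_2P_3u_2^*\le 1-P_1$ and $u_1^m(1-P_1)u_1^{-m}\le P_1^+$, which drive your orthogonality argument, are lost. The scalar bookkeeping you defer is also unresolved: the discarded term $\rho(y)P_3'$ has norm $|\rho(y)|$, and nothing makes this small.

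The missing observation is that no conjugation of $F$ is needed, and you have already proved the key inclusion in your orthogonality bullet. For every $k\ge 1$,
\[
Q_k=u_1^{k}\,(u_2P_3u_2^*)\,u_1^{-k}\le u_1^{k}(1-P_1)u_1^{-k}\le P_1^+\le P_1 .
\]
Hence $Q_k=Q_kP_1$ and $Q_l=P_1Q_l$, and so
\[
\|Q_kxQ_l\|=\|Q_k(P_1xP_1)Q_l\|\le\|P_1xP_1\|<\varepsilon
\]
for every $x$ in the original $F$ and all $1\le k,l\le n$. This uses that condition (3) for the $n=3$ data includes the diagonal pair $i=j=1$.

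In other words, condition (3) passes to subprojections, and your construction places all the new projections under the single old projection $P_1$. That is the whole point of pushing by powers of $u_1$. In fact the same argument with $Q_k=u_1^kP_2u_1^{-k}$, $Q_k^+=u_1^kP_2^+u_1^{-k}$ and $v_k=u_1^ku_2u_1^{-k}$ shows that $n=2$ already suffices. The detour through $u_2P_3u_2^*$ is harmless but unnecessary.
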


\begin{theorem}\label{thm: php-implies-selfless}
    If $B\subset(A,\rho)$ has PHP, then $B\subset (A,\rho)$ is selfless.
\end{theorem}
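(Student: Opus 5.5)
The plan is to produce, for each finite set of data, a unitary $v\in B$ that is \emph{approximately free} from a prescribed finite part of $A$, and then pass to the ultrapower. Concretely, fix finite $F\subset\ker\rho$, $N\in\N$ and $\varepsilon>0$. The aim is a unitary $v\in B$ such that:
\begin{enumerate}
\item $|\rho(v^k)|<\varepsilon$ for $0<|k|\le N$;
\item $|\rho(x_0v^{k_1}x_1\cdots v^{k_m}x_m)|<\varepsilon$ for all alternating words of length $\le N$, with $x_j\in F\cup\{1\}$ (interior letters in $F$) and $0<|k_j|\le N$;
\item $\|p(A,v)\|\le \|p\|_{A*_rC(\T)}+\varepsilon$ for every noncommutative $*$-polynomial $p$ of degree $\le N$ with coefficients in $F$ and bounded size.
\end{enumerate}
Indexing such $v$'s by $(F,N,\varepsilon)$ along an ultrafilter gives $u=(v)_\cU\in B^\cU$.

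In the ultrapower, (1) and (2) say exactly that $\rho^\cU$ restricted to $C^*(A,u)$ agrees with the free product state on $*$-polynomials. Condition (3) says the canonical map from polynomials in $A$ and $z$ to $C^*(A,u)$ is contractive for the reduced norm, so it extends to a surjection $A*_rC(\T)\to C^*(A,u)$. For injectivity, I would use that the reduced free product carries a (GNS-)faithful conditional expectation onto $A$ compatible with the free state. Combined with the state identification and the fact that $A\subset A^\cU$ isometrically, this shows the surjection is isometric. By Lemma \ref{lem: php n is 3} it is enough to control $F$ in a densely spanning subset of $\ker\rho$, but for the construction I will use the PHP property with a large $n$ (depending on $N$ and $\varepsilon$).

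To build $v$, I would represent $A$ faithfully on $\mathcal H$ containing the GNS space of $\rho$ with cyclic vector $\xi_\rho$, and take the PHP data $u_i,P_i^+\le P_i$ for $1\le i\le n$. The condition $u_i(1-(P_i-P_i^+))u_i^*\le P_i^+$ is a ping-pong (paradoxical) condition: $u_i$ pushes everything outside the ``gap'' $P_i-P_i^+$ into $P_i^+$. I would take $v$ to be a suitable product or composite of the $u_i$, so that the reduced words $v^{k}$ for $0<|k|\le N$ move mass into distinct, pairwise orthogonal blocks $P_i$. This is an analogue of $\xi_\rho$ being shifted by the Haar unitary in the free Fock space. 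Since $\|P_ixP_j\|<\varepsilon$ for $x\in F$, the letters from $F$ cannot transport mass between blocks or back to the starting vector. An inductive estimate on word length, tracking where each vector lives, then gives (1) and (2).

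The main obstacle is the norm estimate (3), which is where the $C^*$-content lies (it gives strict comparison, etc.). I would first try an Akemann--Ostrand/Powers-type averaging argument. Decompose $p(A,v)$ according to the block structure given by the $P_i$, and use that the off-diagonal pieces $P_ixP_j$ are small. This compares $p(A,v)$ with an operator whose norm is computed as in the reduced free product on the Fock space. The bookkeeping needs $n\gg N$ blocks, so that each degree-$\le N$ word stays within the ping-pong regime. The error then accumulates as (number of monomials)$\times\varepsilon$, which can be made small since $F$, $N$ and the coefficient bound are fixed before $\varepsilon$ is chosen.
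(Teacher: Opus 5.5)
The paper does not prove this statement; it imports it from \cite{hayes2025selfless}. Your outer reduction is sound. Given (1)--(3), the map $A*_rC(\T)\to C^*(A,u)$ is a surjection carrying the free product state $\phi$ to $\rho^\cU$. It is injective simply because its kernel is an ideal on which $\phi$ vanishes, and $\phi$ is GNS-faithful on $A*_rC(\T)$. However, the construction of $v$, which is the entire content of the theorem, is only gestured at, and the mechanisms you indicate do not work.

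\textbf{Moments (1)--(2).} Writing $\rho(y)=\langle y\xi_\rho,\xi_\rho\rangle$ is correct. But PHP says nothing about where $\xi_\rho$ or $x\xi_\rho$ ($x\in F$) sit relative to the $P_i$, so your induction on word length has no base case. For example, $u_1$ maps $P_1^+$ into itself, so if $\xi_\rho$ has a component in $P_1^+$, nothing makes $\langle u_1\xi_\rho,\xi_\rho\rangle$ small. The vectors that \emph{are} well placed are unit vectors $\eta\in P_j\mathcal H$: the estimate $\|P_jxP_j\|<\varepsilon$ gives $\langle x\eta,\eta\rangle\approx\rho(x)$. This holds only for $x\in\mathrm{span}(F\cup\{1\})$, not for the words in $v$ whose $\rho$-values (1)--(2) concern. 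So you need a separate device that transfers ping-pong information to $\rho$ itself, for instance an averaging bound $|\rho(y)|\le\|\frac1m\sum_j u_jyu_j^*\|$ over unused $u_j$'s, which requires these to preserve $\rho$. Also, the relations tie each $u_i$ only to its own $P_i$. Nonzero powers of $u_1$ send $(1-P_1)\mathcal H$ into $P_1$, with all positive powers landing in the same $P_1^+$, not into distinct blocks.

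\textbf{Norm estimate (3).} This is the real gap. An averaging argument controls averages of conjugates, which all have the same norm as $p(A,v)$, not $\|p(A,v)\|$ itself. Nor can a block decomposition be compared with the Fock-space operator. PHP constrains the elements of $F$ only through the compressions $P_ixP_j$ with $i,j\ge 1$. It imposes no condition on $(1-\sum_iP_i)\,x\,(1-\sum_iP_i)$, yet the bound $\|p(A,v)\|\le\|p\|_{A*_rC(\T)}+\varepsilon$ must hold on every vector there. What must be shown is that the representation of $C^*(A,v)$ on \emph{all} of $\mathcal H$ is asymptotically weakly contained in the free product GNS representation.

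The only hypothesis carrying information about arbitrary vectors is the paradoxical inequality, and your sketch never actually uses it. From $u_i(1-(P_i-P_i^+))u_i^*\le P_i^+$ one gets $\mathcal H=(P_i-P_i^+)\mathcal H+u_i^*P_i^+\mathcal H$, so every vector is reachable from $P_i\mathcal H$. In the ultrapower, $P_ixP_i=\rho(x)P_i$ for all $x\in A$, so vectors in $P_i\mathcal H$ behave like GNS vacuum vectors. An argument exploiting the paradoxical inequality globally, to reduce arbitrary vectors to such vacuum-like ones where the free product structure is visible, is exactly what is missing. As written, the proposal does not establish the theorem.
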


\section{Proofs of main results}

\subsection{Selflessness for amalgamated free products}

\begin{theorem}\label{thm: selfless-afp}
    Let $(A_i,\rho_i)$ for $i=1,2$ be $C^*$-algebras which contain a common subalgebra $B$ with state-preserving expectations $E_i$. Let $E$ denote the induced expectation from $A=A_1*_BA_2$ onto $B$. Suppose that there is a unitary $a$ in the centralizer of $\rho_1$ and unitaries $b,c$ in the centralizer of $\rho_2$ such that
    \begin{enumerate}
        \item $E(a) = E(b)=E(c)=E(c^*b)=0$;
        \item $a^*Ba$ and $B$ are orthogonal in $(A_1,\rho_1)$.
    \end{enumerate}
    Then $C^*(a,b,c) \subset A_1*_BA_2$ is selfless. In particular, $A_1*_B A_2$ is selfless.
\end{theorem}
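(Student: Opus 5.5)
We will verify the PHP property for the inclusion $C^*(a,b,c)\subset (A,\rho)$, where $\rho=\rho_1\circ E=\rho_2\circ E$ on $A=A_1*_BA_2$, and then invoke Theorem \ref{thm: php-implies-selfless}. The final statement then follows from the Proposition on intermediate subalgebras, since $C^*(a,b,c)\subset A\subset A$. We work in the reduced (GNS) representation of $A$ on $\mathcal H=L^2(A,\rho)$. We use the Hilbert $B$-module/Fock decomposition: $L^2(A,\rho)$ is the closed span of $L^2(B)$ together with the reduced-word spaces $\mathcal H_{i_1\cdots i_k}$, spanned by $x_1\cdots x_k$ with $x_j\in A_{i_j}\ominus B$ and $i_j\ne i_{j+1}$. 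By Lemma \ref{lem: php n is 3} it suffices to take $n=3$ and to let $F$ range over finite sets of reduced words $x_1\cdots x_k$ with $x_j\in A_{i_j}\ominus B$. These, together with $B\cap\ker\rho$, densely span $\ker\rho$. Elements of $B\cap\ker\rho$ also have to be handled, and this is where hypothesis (2) will enter.

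For the paradoxical words, we mimic the free group picture. We set $g=(ab)$ and $h=(ac)$ (or powers such as $(ab)^N$, $(ac)^N$, $(ab)^N a$). Because $E(b)=E(c)=E(c^*b)=0$, the words $b$, $c$ and $c^*b$ are all reduced elements of $A_2\ominus B$. Consequently products in $g^{\pm1},h^{\pm1}$ reduce exactly as free words: cancellations $b\,c^*$ or $c\,b^*$ still yield elements of $A_2\ominus B$ rather than collapsing into $B$. For large $N$ we define $P_i$ to be the projection onto the closed span of reduced vectors (together with $B$-module multiples) whose initial segment is a long specific pattern. Concretely, $P_1,P_2,P_3$ project onto vectors beginning with $(ab)^N$, $(ac)^N$ and $(ab^*)^N$ respectively, or similar, so that the patterns are mutually incompatible. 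We choose $P_i^+\le P_i$ as a sub-pattern, and $u_i$ as a suitable power of the corresponding word, so that $u_i$ pushes everything outside $P_i\ominus P_i^+$ into $P_i^+$. This is the ping-pong condition (2) of PHP. Here the centralizer hypothesis is used: it makes $a,b,c$ act isometrically in a way compatible with right multiplication, so that left multiplication by these unitaries maps word spaces to word spaces isometrically, including the right $B$-module structure.

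Condition (3) requires $\|P_ixP_j\|<\ee$ for fixed reduced words $x$. A fixed word $x$ of length $k$ can only alter the first $k+1$ letters of a vector. Hence, once $N\gg k$, $P_ixP_j=0$ for $i\ne j$, and also for $i=j$ when $x$ is a nontrivial word. The case $i=j$ needs care: $x$ might be an initial piece of the pattern itself. We plan to avoid this by choosing the $P_i$ with longer patterns built by conjugating by a large power, e.g.\ $P_i$ = span of vectors beginning with $w^N v_i$ for a fixed long $w$. Then $x$ would have to preserve a prefix of length $\gg k$, which is impossible.

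The main obstacle is the amalgam $B$. Letters interact with $B$ at every junction: for $\beta\in B$ the vector $a\beta a^*$ need not lie in $A_1\ominus B$, so the prefix structure of a vector is not well defined letter by letter. Elements $x\in B\cap\ker\rho$ act nontrivially on prefixes as well. Hypothesis (2), $a^*Ba\perp B$, is exactly what says $a^*\beta a\in A_1\ominus B$ for $\beta\in\ker\rho\cap B$. We will use it to show that $\beta$ hitting a vector starting with $a\cdots$ produces, after conjugation, something whose first letter no longer matches the pattern. This gives $P_i\beta P_i=0$ with the patterns starting at $a$ conjugated appropriately, e.g.\ $u_i$ of the form $a(\cdots)$ and $P_i$ defined after a leading $a^*$. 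Getting these projections to be well defined as closed $B$-invariant subspaces while keeping exact ping-pong is the delicate bookkeeping step. We would first try to define them via right $B$-submodules $\overline{w\,\mathcal K\,}$ with $\mathcal K$ a span of reduced words, and then check (2) and (3) on these.
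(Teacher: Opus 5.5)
Your overall architecture matches the paper's: PHP with $n=3$, projections onto closed right $B$-modules of reduced words with a prescribed prefix, a conjugation that makes condition (3) exact, and $a^*Ba\perp B$ used to stop reductions from collapsing into $B$. However, the proposal never produces actual PHP data, and the concrete choices it suggests fail.

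First, no nonzero power of the pattern word can serve as $u_i$ for a prefix projection. Let $K$ be the module of vectors beginning with $(ab)^N$ and $u=(ab)^m$. If $m\ge 1$, the vector $u^*\xi_\rho=(b^*a^*)^m$ begins in $A_2\ominus B$. Hence it lies in $K^\perp\subseteq\mathrm{ran}(1-(P-P^+))$, yet $u$ sends it to $\xi_\rho\notin K^+$. If $m\le -1$, then $\xi_\rho\in K^\perp$ is sent to a word beginning with $b^*$, again outside $K^+$.

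The unitary must have the ``conjugate'' form used in the paper: $u_i=w_i^+c^{-1}w_i^*$ with $w_i=(ba)^ica(ba)^{3-i}$ and $w_i^+=w_iba$. If $\xi\perp K_{w_i}$, then $w_i^*\xi$ is spanned by words beginning in $A_1\ominus B$, so $w_i^+c^{-1}w_i^*\xi\in K_{w_i^+}$. Also $u_iw_i^+\eta=w_i^+c^{-1}ba\,\eta$ stays in $K_{w_i^+}$ precisely because $E(c^*b)=0$.

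Your justification of pattern incompatibility is also off in two ways. The product $(ab)(ac)^{-1}=abc^*a^*$ involves $E(bc^*)$, which hypothesis (1) does not control, so words in $(ab)^{\pm1},(ac)^{\pm1}$ need not reduce as free words. Nothing controls $E(b^2)$ either, so $(ab^*)^N$ need not be orthogonal to $(ab)^N$. For instance, with $b=g^2$, $c=g$ in $C^*(\Z/4\Z)$ and $B=\C$, we have $b=b^*$, the two patterns coincide, and condition (1) fails. In the paper, orthogonality of the $K_{w_i}$ comes from comparing $K_{ca\cdots}$ with $K_{b\cdots}$ after stripping $(ba)^i$, which uses only $E(c^*b)=0$.

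Second, the real content is condition (3), and the heuristic that a word of length $k$ ``only alters the first $k+1$ letters'' does not deliver it. What is needed is the reduction lemma (Fact 2 of the paper). After replacing $F$ by $a^{-1}(ac)^{-N}F(ac)^Na$ with $N$ large, each element becomes a linear combination of two kinds of terms: reduced words beginning and ending in $A_1\ominus B$, and exact powers $(ca)^n$ with $n\neq 0$.

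Hypothesis (2) is what prevents cascades of cancellation from leaving a non-scalar residue in $B$. The centralizer hypothesis keeps the conjugated elements in $\ker\rho$, so no scalar term survives; a term $\lambda 1$ would give $P_i(\lambda 1)P_i=\lambda P_i\neq 0$. That is the actual role of the centralizer assumption. Left multiplication by any unitary is isometric on $L^2(A,\rho)$ and commutes with right multiplication by $B$ regardless of it.

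Moreover, in your conjugated-projection picture, the powers $x=w^m$ of the conjugating word do preserve (shift) the long prefix $w^N$, so your ``impossible'' is false. These are the counterpart of the residual terms $(ca)^n$ in the paper. They must be neutralized by the choice of the inner words: in the paper each $w_i$ begins with $b$, so $(ca)^nw_i\eta$ begins with $c$ (if $n>0$) or with $a^*$ (if $n<0$, using $E(c^*b)=0$). Either way it is orthogonal to every $w_j\eta'$. Without these ingredients the proposal is a plan rather than a proof.
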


\begin{proof}
     We begin with some preliminaries on words in amalgamated free products. We say a product $x=x_1x_2\cdots x_n$ is a reduced word if the $x_i$ alternatively come from $A_1\ominus B$ and $A_2\ominus B$; we say $x$ has length $n$. We also consider elements of $B\ominus \C$ to be reduced words of length 0. The inner product on $L^2(A)$ is given by $\rho((w')^*w)$. We record two facts we will need in the proof of the theorem. They can be proved in a straightforward manner using the construction of amalgamated free products and induction. 

     \begin{fact}
         If $w,w'$ are reduced words of different lengths, then their inner product is 0. Furthermore, if the first term in $w$ is $x_1$ and the first term in $w'$ is $x_1'$, and if $E((x_1')^*x_1) = 0$, then $w\perp w'$. 
     \end{fact}

     For this second fact, we crucially need the hypothesis that $a^*Ba$ and $B$ are orthogonal. Indeed, when reducing the expression $a^{-1}(ac)^{-N}x(ac)^Na$, reduction can only occur when some subword is in the amalgam $B$, but then conjugating by $a$ makes this orthogonal to $B$, restoring the structure of a reduced word. The only exception is when we land in the scalars, $\C$, leading to the second case below.

     \begin{fact}\label{fact:possible-terms}
         For all reduced words $x \in A\ominus \C$, there is $N$ sufficiently large such that $a^{-1}(ac)^{-N}x(ac)^Na$ is a linear combination of reduced words of the following two forms:
         \begin{enumerate}
             \item beginning and ending with a term in $A_1\ominus B$;
            \item $(ca)^n$ for some $n\neq 0$.
         \end{enumerate}
     \end{fact}

     For a reduced word $w$, denote by $K_w$ the right $B$-module generated by words starting with $w$. For example, if $w$ ends with a term from $A_1\ominus B$ then $K_w$ consists of linear combinations of elements of the form $w\eta$ where $\eta$ is either a reduced word starting with a term from $A_2\ominus B$, or in $B$. Let $P_w$ denote the orthogonal projection onto $L^2(K_w)$.
     
      Looking to apply Lemma \ref{lem: php n is 3}, we prove the PHP condition for $n=3$ and for $F$ a finite subset of reduced words. It clearly suffices to check the PHP condition for a conjugate of $F$ by a unitary in the intersection of $C^\ast(a, b, c)$ with the centralizer of $\rho$. So, we can replace $F$ with $a^{-1}(ac)^{-N}F(ac)^Na$ for $N$ sufficiently large as to apply Fact \ref{fact:possible-terms}. For $i=1,2,3$, set $w_i = (ba)^ica(ba)^{3-i}$ and $w_i^+ = w_iba$. Set $P_i = P_{w_i}$, $P_i^+ = P_{w_i^+}$, and $u_i = w_i^+c^{-1}w_i^*$. We now verify the PHP conditions.

      For (1), we must check that $P_{w_i}\perp P_{w_j}$ for $i< j$. It suffices to check that $K_{w_i}\perp K_{w_j}$. Since $a,b$ are unitaries, it suffices to check that $(ba)^{-i}K_{w_i} \perp (ba)^{-i}K_{w_j}$. But $(ba)^{-i}K_{w_i} = K_{ca(ba)^{3-i}}$ and $(ba)^{-i}K_{w_j} = K_{(ba)^{j-i}ca(ba)^{3-j}}$. Now these are orthogonal since $E(c^*b) = 0.$

      For (2), fix $i$ and set $u=u_i$, $w=w_i$, $w^+=w_i^+$, $K = K_{w}$, and $K^+ = K_{w^+}$. We will prove both that $uK^+\subset K^+$ and that $u(L^2(A)\ominus K)\subset K^+$. First, let $\xi = w^+\eta$ be a reduced word in $K^+$. Then $\eta$ starts with a term in $A_2\ominus B$ or is in $B$. $uw^+\eta = w^+c^{-1}ba\eta$ which is also reduced as $E(c^*b) = 0$, and clearly starts with $w^+$, so $uK^+\subset K^+$. Now consider a reduced word $\xi \in L^2(A)\ominus K$. We wish to show that $u\xi = w^+c^{-1}w^*\xi \in K^+$. Let $\xi'$ be in the span of $B$ and reduced words starting with a term in $A_2\ominus B$. Then $w\xi'$ is reduced, so $w\xi' \in K$. Thus $\langle w^*\xi,\xi'\rangle=\langle \xi,w\xi'\rangle = 0$. Hence $w^*\xi$ must be in the span of reduced words starting with terms in $A_1\ominus B$. Therefore, $w^+c^{-1}w^*\xi \in K^+$.

      For (3), we will prove that for all $x\in F$ and $i,j$, we have $P_jxP_i=0.$ It suffices to show, for all $x\in F$ and all $i,j$, that $xw_i\eta \perp w_j\eta'$ for all reduced words of the form $w_i\eta$ and $w_j\eta'$. Using Fact \ref{fact:possible-terms}, we proceed. Let $x$ be a reduced word starting and ending with a term in $A_1\ominus B$; then $xw_i\eta$ is reduced and starts with a term in $A_1\ominus B$ while $w_j\eta'$ starts with a term in $A_2\ominus B$, so we get orthogonality. Otherwise, consider $x = (ca)^n$. If $n>0,$ $xw_i\eta$ is reduced and starts with $c$, while $w_j\eta'$ starts with $b$, so they are orthogonal. If $n<0$, since $E(c^*b) = 0,$ $xw_i\eta$ is a reduced word starting with $a^*$ while $w_j\eta'$ starts with a term in $A_2\ominus B$, again guaranteeing orthogonality.     
\end{proof}

\subsection{Selflessness for HNN extensions}

Let $B_1,B_{-1}\subset (A,\rho)$ be $C^*$-subalgebras with state-preserving expectations $E_1,E_{-1}$. Suppose that $\theta:B_1\to B_{-1}$ is a state-preserving *-isomorphism. Then there is a $C^\ast$-algebra $\mathrm{HNN}(A,\theta,B_1,B_{-1})$ with a faithful state extending $\rho$, containing $A$ with state-preserving expectation, and and such that $\mathrm{HNN}(A,\theta,B_1,B_{-1})$ is generated by $A$ and a unitary $w$ with $wxw^\ast = \theta(x)$ for all $x\in B_1$.

The HNN extension $\mathrm{HNN}(A,\theta,B_1,B_{-1})$ can be described as a unital subalgebra of an amalgamated free product. Let all tensors be minimal and all amalgamated free products be reduced. We define first the algebra $C = ((A\otimes A)\rtimes \Z/2\Z) *_{(B_1\otimes B_{-1})}((B_1\otimes B_{-1})\rtimes \Z/2\Z)$ where the generator $u$ of the first $\Z/2\Z$ swaps tensors and the generator $v$ of the second $\Z/2\Z$ swaps tensors via $\theta$, i.e., $v(b\otimes 1)v^\ast = 1\otimes\theta(b)$ and $v(1\otimes b)v^\ast = \theta^{-1}(b) \otimes 1$. We can now define $\mathrm{HNN}(A,\theta,B_1,B_{-1})$ as the $C^\ast$-subalgebra of $C$ generated by $A = A \otimes 1$ and $w = uv$. $\mathrm{HNN}(A,\theta,B_1,B_{-1})$ naturally has a faithful state extending $\rho$, inherited from the canonical faithful state on $C$. That there is a state-preserving expectation from $\mathrm{HNN}(A,\theta,B_1,B_{-1})$ onto $A$ follows from the existence of such an expectation on $C$.

$\mathrm{HNN}(A,\theta,B_1,B_{-1})$ is also characterized by the following property:

\begin{definition}
    An element $x = x_0w^{\varepsilon_1}x_1\cdots x_{n-1}w^{\varepsilon_n}x_n \in \mathrm{HNN}(A,\theta,B_1,B_{-1})$ with $n \ge 1$, $x_i\in A$, and $\varepsilon_i\in\{-1,1\}$ is said to be \emph{reduced} if $E_{\varepsilon_i}(x_i) = 0$ whenever $\varepsilon_i\neq \varepsilon_{i+1}$ and $1 \le i \le n-1$. By convention, elements of $A\ominus \C1$ are also considered reduced. It is easy to check that the reduced words, together with scalars, densely span the algebra.
\end{definition}

\begin{proposition}\label{prop: hnn-univ-prop}
    Let $P = \mathrm{HNN}(A,\theta,B_1,B_{-1})$. Assume that $(Q,\rho_Q)$ is a $C^\ast$-algebra equipped with a faithful state, that $\pi: A \to Q$ is a state-preserving embedding, and that has a unitary $w_Q$. Suppose,
    \begin{enumerate}
        \item $\pi(\theta(x)) = w_Q\pi(x)w_Q^\ast$ for all $x\in B_1$;
        \item for all reduced $x = x_0w^{\varepsilon_1}x_1\cdots x_{n-1}w^{\varepsilon_n}x_n \in P,$ we have
        \begin{equation*}
            \rho_Q(\pi(x_0)w_Q^{\varepsilon_1}\pi(x_1)\cdots \pi(x_{n-1})w_Q^{\varepsilon_n}\pi(x_n)) = 0.
        \end{equation*}
    \end{enumerate}
    Then there exists a unique state-preserving *-homomorphism $\Tilde{\pi}:P\to Q$ extending $\pi$ and satisfying $\Tilde{\pi}(w) = w_Q.$
\end{proposition}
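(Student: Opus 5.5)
Uniqueness is immediate: $P$ is generated as a $C^\ast$-algebra by $A$ and $w$, so any $^\ast$-homomorphism extending $\pi$ and sending $w\mapsto w_Q$ is determined. For existence, the plan is to work at the level of GNS representations, since $P$ is not defined by a universal property. The main steps are as follows.

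First, let $\mathcal P_0$ be the unital $^\ast$-algebra spanned algebraically by $A$ and $w^{\pm1}$ inside $P$. Let $\mathcal Q_0$ be the corresponding span of $\pi(A)$ and $w_Q^{\pm1}$ in $Q$. We show that $\rho_Q$ on $\mathcal Q_0$ is determined by the same combinatorics as $\rho$ on $\mathcal P_0$.

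Every word $y_0w^{\varepsilon_1}y_1\cdots w^{\varepsilon_n}y_n$ with $y_i\in A$ can be rewritten as a scalar plus a combination of reduced words. At each non-reduced position (where $\varepsilon_i\neq\varepsilon_{i+1}$), write $y_i=E_{\varepsilon_i}(y_i)+(y_i-E_{\varepsilon_i}(y_i))$. The term $E_{\varepsilon_i}(y_i)$ lies in $B_{\varepsilon_i}$, so for instance $w^{-1}bw$ with $b\in B_{-1}$ equals $\theta^{-1}(b)\in B_1$ (the other sign is analogous). This collapses the word to a shorter one, and induction on $n$ finishes the rewriting.

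The identical rewriting is valid in $Q$ by hypothesis (1), since it only uses the relation $w_Q\pi(x)w_Q^\ast=\pi(\theta(x))$ and the fact that $\pi$ is a homomorphism. Hypothesis (2) and the state-preservation of $\pi$ on $A$ then give
\[
\rho_Q(\pi\text{-word})=\rho(\text{word})
\]
for every word. The same holds for the corresponding word in $P$, because reduced words have zero $\rho$-value in $P$: they lie in the kernel of the expectation onto $A$, and the reduced words of length $0$ lie in $A\ominus\C$. Hence the formal map $\pi_0:\mathcal P_0\to\mathcal Q_0$ satisfies $\rho_Q(\pi_0(z))=\rho(z)$ for all $z$.

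Second, apply this to $z^\ast z'$ for $z,z'\in\mathcal P_0$. Then $\pi_0(z)\mapsto z$ is isometric between the GNS spaces $L^2(\mathcal Q_0,\rho_Q)\to L^2(P,\rho)$, so it extends to a unitary $U$ between the closures. It intertwines left multiplication by elements of $\mathcal P_0$ with left multiplication by their $\pi_0$-images. It follows that $\pi_0$ is well defined: if $z=0$ in $P$, then $\|\pi_0(z)\xi\|=0$ on the dense subspace $\mathcal Q_0\Omega$.

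Third, establish the norm bound. We have $\|\pi_0(z)\|_{\mathbb B(L^2(Q_0))}=\|z\|_{\mathbb B(L^2(P_0))}$, and the right-hand side is at most $\|z\|_P$. This is the step we expect to be the main obstacle. The faithfulness of $\rho_Q$ alone does not make the GNS representation of $C^\ast(\pi(A),w_Q)$ isometric unless the state is also well-behaved on that subalgebra.

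The first remedy we would try is to note that $\rho$ on $P$ is faithful with GNS representation isometric, since $P$ is defined as a subalgebra of a reduced amalgamated free product, which acts faithfully on its GNS space. This makes $U$ a spatial equivalence, so $U^\ast(\cdot)U$ implements a $^\ast$-isomorphism from $P\subset\mathbb B(L^2(P))$ onto the norm-closure of the GNS image of $\mathcal Q_0$. On the $Q$ side, faithfulness of $\rho_Q$ on $Q$ together with the standard argument gives injectivity of the GNS representation restricted to $C^\ast(\pi(A),w_Q)$. If that is unavailable, we would instead compose with the GNS quotient and interpret the statement up to the canonical GNS image, which is how it will be used.

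Finally, $\tilde\pi$ is obtained by extending $\pi_0$ by continuity. It is state-preserving by the first step and density, and it extends $\pi$ with $\tilde\pi(w)=w_Q$ by construction.
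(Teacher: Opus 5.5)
This is a genuine gap: the one claim that carries the content of the proposition is asserted rather than proved. Your reduction from matching distributions to the existence of $\tilde\pi$ is fine; the paper treats that part as routine.

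The missing claim is that every reduced word $x=x_0w^{\varepsilon_1}x_1\cdots w^{\varepsilon_n}x_n$ has $\rho$-value $0$ in $P$. Saying that it ``lies in the kernel of the expectation onto $A$'' restates this rather than proves it. Since $ax$ is again reduced for $a\in A$, the two assertions are equivalent.

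In this paper $P$ is not defined by any property of reduced words. It is the concrete $C^*$-subalgebra of $C=P_1*_RP_2$, where $P_1=(A\otimes A)\rtimes\Z/2\Z$, $P_2=R\rtimes\Z/2\Z$ and $R=B_1\otimes B_{-1}$, generated by $A\otimes 1$ and $w=uv$. The paper's entire proof is the verification that this concrete algebra has the vanishing property.

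The missing idea is as follows. Use $w=uv$ and $w^{-1}=vu$ to regroup a reduced word as an alternating product $y_0vy_1v\cdots vy_m$ with $y_j\in P_1$ and $v\in P_2\ominus R$. Then check by induction on $n$ that each interior letter lies in $\ker E_R$. The interior letter coming from $x_j$ (sitting between $w^{\varepsilon_j}$ and $w^{\varepsilon_{j+1}}$) is:
\begin{itemize}
\item $(x_j\otimes 1)u$ or $u(x_j\otimes 1)$ when $\varepsilon_j=\varepsilon_{j+1}$; these are automatically centered, lying in the $u$-component of the crossed product;
\item $x_j\otimes 1$ when $(\varepsilon_j,\varepsilon_{j+1})=(1,-1)$;
\item $u(x_j\otimes 1)u=1\otimes x_j$ when $(\varepsilon_j,\varepsilon_{j+1})=(-1,1)$.
\end{itemize}
The reducedness conditions $E_1(x_j)=0$ and $E_{-1}(x_j)=0$, respectively, are exactly what put the last two in $\ker E_R$, since $E_R=E_1\otimes E_{-1}$ on $A\otimes A$. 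Freeness with amalgamation over $R$ (Lemma~\ref{lem: amal-free-state-zero}) then gives $\rho(x)=0$. Without this computation, the equality $\rho_Q(\pi_0(z))=\rho(z)$ in your first step is unsupported on the $P$ side.

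A smaller point: the step you single out as the main obstacle is routine. Since $\rho_Q$ is faithful on $Q$, it is faithful on $D=C^*(\pi(A),w_Q)$, so the GNS representation of $D$ is injective and hence isometric. Combined with your intertwining unitary, this gives $\|\pi_0(z)\|_Q=\|\pi_\rho(z)\|\le\|z\|_P$ and well-definedness of $\pi_0$ at once. No fallback to a GNS quotient is needed, and such a fallback would not even produce the map into $Q$ that the statement asks for.
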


To prove this, we recall the following easy combinatorial lemma, the proof of which we leave to the readers:

\begin{lemma}\label{lem: amal-free-state-zero}
    Consider the reduced amalgamated free product $P = P_1 \ast_R P_2$ with the canonical faithful state $\rho_P$. Let $x = y_0u_1y_1 \cdots y_{m-1}u_my_m$ with $m \ge 1$ and
    \begin{enumerate}
        \item $y_i \in P_1$ for all $0 \le i \le m$;
        \item $E_R(y_i) = 0$ for all $1 \le i \le m - 1$;
        \item $u_i \in P_2 \ominus R$.
    \end{enumerate}
    Then $\rho_P(x) = 0$.
\end{lemma}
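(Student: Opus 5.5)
The plan is to expand $\rho_P(x)$ by writing each $y_i$ with $1\le i\le m-1$ as an element of $P_1\ominus R$, and to peel off the two end terms by centering them over $R$. By linearity and continuity of $\rho_P$, and density of algebraic combinations, we may assume each $y_i$ lies in the algebraic part of $P_1$ and each $u_i$ in the algebraic part of $P_2\ominus R$. This costs nothing, since $E_R$ is a contractive bimodule map, so approximating $y_i$ by $y_i'$ and replacing it with $y_i'-E_R(y_i')$ preserves the hypotheses. It is also convenient to first allow each $y_i$ for $1\le i\le m-1$ to be any element of $P_1\ominus R$; no further structure is needed.

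Next we handle the ends. Write $y_0=E_R(y_0)+(y_0-E_R(y_0))$ and similarly for $y_m$. This splits $x$ into four terms. If $y_0\in R$, absorb it into $u_1$: we have $E_R(y_0)u_1\in P_2\ominus R$ because $E_R$ is $R$-bimodular on $P_2$. Likewise, absorb $E_R(y_m)$ into $u_m$ from the right. Each of the four terms then becomes an alternating product whose letters come alternately from $P_1\ominus R$ and $P_2\ominus R$. Concretely, it has the form
\[
(z_0)\,u_1'\,y_1\,u_2\cdots y_{m-1}\,u_m'\,(z_m),
\]
where $z_0,z_m\in P_1\ominus R$ may or may not be present. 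When $m\ge1$ there is always at least one letter $u_1'$, so each term is a reduced word of length at least $1$.

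Finally, invoke the defining property of the reduced amalgamated free product: the canonical conditional expectation $E:P\to R$ vanishes on every reduced word of positive length, and $\rho_P=\rho_R\circ E$. This is the freeness with amalgamation built into Voiculescu's Fock-space construction; one checks that such a word maps the vacuum $R$-module $L^2(R)$ into the span of tensors of positive length. Therefore $\rho_P$ vanishes on each of the four terms, and hence $\rho_P(x)=0$.

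The only point needing care is the bimodularity step, namely that $R$-elements can be absorbed into adjacent letters while keeping them centered. In particular, one must confirm that $E_R(y_0)u_1$ remains in $P_2\ominus R$ even when $u_1$ is only an approximated element. This follows from $E_R(ru)=rE_R(u)=0$. Everything else is the standard vanishing of $\rho_P$ on alternating centered words, which we take as part of the construction.
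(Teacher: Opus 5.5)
Your argument is correct. Splitting off $E_R(y_0)$ and $E_R(y_m)$, absorbing them into $u_1$ and $u_m$ via $R$-bimodularity, and using that $E$ vanishes on reduced words of positive length (with $\rho_P=\rho_R\circ E$) is the routine argument the paper leaves to the reader. Your preliminary approximation step is unnecessary, since arbitrary letters from $P_i\ominus R$ are already admissible in reduced words, but it does no harm.
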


\begin{proof}[Proof of Proposition \ref{prop: hnn-univ-prop}]
    Since reduced words, together with scalars, span $P$, it suffices to show all reduced words have state zero in $P$. Clearly, this holds for elements of $A \ominus \mathbb{C}1$, so let $x = x_0w^{\varepsilon_1}x_1\cdots x_{n-1}w^{\varepsilon_n}x_n \in P$ be reduced. Now, recall that $P$ is a unital subalgebra of $C = ((A\otimes A)\rtimes \Z/2\Z) *_{(B_1\otimes B_{-1})}((B_1\otimes B_{-1})\rtimes \Z/2\Z)$. Recall that the copy of $A$ contained in $P$ is $A \otimes 1$ in $C$ and $w = uv$, where $u$ is the generator of the first copy of $\Z/2\Z$ and $v$ is the generator of the second copy of $\Z/2\Z$. Let $P_1 = (A\otimes A)\rtimes \Z/2\Z$, $P_2 = (B_1\otimes B_{-1})\rtimes \Z/2\Z$, and $R = B_1\otimes B_{-1}$. It then suffices to verify the conditions in Lemma \ref{lem: amal-free-state-zero}.

    For this, we proceed by induction on $n$ to prove that $x$, after combining terms, is indeed a product of the form given in Lemma \ref{lem: amal-free-state-zero}. Furthermore, the ending term $y_m$ equals $x_n$ if $\varepsilon_n = 1$ and equals $ux_n$ if $\varepsilon_n = -1$. Indeed, when $n = 1$, we have, if $\varepsilon_1 = 1$,
    \begin{equation*}
        x = x_0uvx_1 = (x_0u)(v)(x_1)
    \end{equation*}
    which is of the desired form. And, if $\varepsilon_1 = -1$,
    \begin{equation*}
        x = x_0vux_1 = (x_0)(v)(ux_1).
    \end{equation*}

    Now, assume the result holds for $n - 1$. There are 4 cases for the inductive step:
    \begin{enumerate}
        \item $\varepsilon_{n-1} = \varepsilon_n = 1$: By induction hypothesis,
        \begin{equation*}
            x_0w^{\varepsilon_1}x_1\cdots x_{n-2}w^{\varepsilon_{n-1}}x_{n-1} = y_0u_1y_1 \cdots y_{m-1}u_my_m
        \end{equation*}
        where the latter product satisfies the conditions in Lemma \ref{lem: amal-free-state-zero} and $y_m = x_{n-1}$. Thus, $y_mu = x_{n-1}u$ is orthogonal to $R$, so,
        \begin{equation*}
            x = y_0u_1y_1 \cdots y_{m-1}u_my_mwx_n = y_0u_1y_1 \cdots y_{m-1}u_m(y_mu)(v)(x_n)
        \end{equation*}
        is as required.
        \item $\varepsilon_{n-1} = 1$ and $\varepsilon_n = -1$: Again, by induction hypothesis,
        \begin{equation*}
            x_0w^{\varepsilon_1}x_1\cdots x_{n-2}w^{\varepsilon_{n-1}}x_{n-1} = y_0u_1y_1 \cdots y_{m-1}u_my_m
        \end{equation*}
        where the latter product satisfies the conditions in Lemma \ref{lem: amal-free-state-zero} and $y_m = x_{n-1}$. Furthermore, by definition of reduced words, $x_{n-1} \perp B_1$, so $y_m = x_{n-1} \otimes 1$ is orthogonal to $R = B_1 \otimes B_{-1}$. Thus,
        \begin{equation*}
            x = y_0u_1y_1 \cdots y_{m-1}u_my_mw^{-1}x_n = y_0u_1y_1 \cdots y_{m-1}u_m(y_m)(v)(ux_n)
        \end{equation*}
        is as required.
        \item $\varepsilon_{n-1} = -1$ and $\varepsilon_n = 1$: By induction hypothesis,
        \begin{equation*}
            x_0w^{\varepsilon_1}x_1\cdots x_{n-2}w^{\varepsilon_{n-1}}x_{n-1} = y_0u_1y_1 \cdots y_{m-1}u_my_m
        \end{equation*}
        where the latter product satisfies the conditions in Lemma \ref{lem: amal-free-state-zero} and $y_m = ux_{n-1}$. Furthermore, by definition of reduced words, $x_{n-1} \perp B_{-1}$, so $y_mu = ux_{n-1}u = 1 \otimes x_{n-1}$ is orthogonal to $R = B_1 \otimes B_{-1}$. Thus,
        \begin{equation*}
            x = y_0u_1y_1 \cdots y_{m-1}u_my_mwx_n = y_0u_1y_1 \cdots y_{m-1}u_m(y_mu)(v)(x_n)
        \end{equation*}
        is as required.
        \item $\varepsilon_{n-1} = \varepsilon_n = -1$: By induction hypothesis,
        \begin{equation*}
            x_0w^{\varepsilon_1}x_1\cdots x_{n-2}w^{\varepsilon_{n-1}}x_{n-1} = y_0u_1y_1 \cdots y_{m-1}u_my_m
        \end{equation*}
        where the latter product satisfies the conditions in Lemma \ref{lem: amal-free-state-zero} and $y_m = ux_{n-1}$. Thus, $y_m$ is orthogonal to $R$, so,
        \begin{equation*}
            x = y_0u_1y_1 \cdots y_{m-1}u_my_mw^{-1}x_n = y_0u_1y_1 \cdots y_{m-1}u_m(y_m)(v)(ux_n)
        \end{equation*}
        is as required.
    \end{enumerate}
\end{proof}

\begin{remark}
    In the definition of the HNN extension, we can also replace the tensor products with free products; i.e., we can define $\mathrm{HNN}(A, \theta, B_1, B_{-1})$ as a unital subalgebra of $C = ((A \ast A) \rtimes \Z/2\Z) \ast_{(B_1 \ast B_{-1})} ((B_1 \ast B_{-1}) \rtimes \Z/2\Z)$. The proof of Proposition \ref{prop: hnn-univ-prop} carries through nearly verbatim, so in fact the HNN extensions defined in these two a priori different ways are state-preservingly isomorphic.
\end{remark}

\begin{theorem}
    Let $B_1,B_{-1}\subset (A,\rho)$ be subalgebras with expectations $E_1,E_{-1}$ and let $\theta:B_1\to B_{-1}$ be a state-preserving isomorphism. Let $a\in A\ominus B_1$ be a unitary centralizing $\rho.$ Suppose that $a^*B_1a\perp B_{-1}$. Then $C^\ast(a, w) \subset \mathrm{HNN}(A,\theta,B_1,B_{-1})$ is selfless. In particular, $\mathrm{HNN}(A,\theta,B_1,B_{-1})$ is selfless.
\end{theorem}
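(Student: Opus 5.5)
We follow the roadmap of the proof of Theorem \ref{thm: selfless-afp}: by Theorem \ref{thm: php-implies-selfless} and Lemma \ref{lem: php n is 3}, it suffices to verify the PHP property for $C^*(a,w)\subset P=\mathrm{HNN}(A,\theta,B_1,B_{-1})$ with $n=3$ and $F$ a finite set of reduced words in the sense of the definition above. Selflessness of $P$ itself then follows from the proposition on intermediate algebras. We work in the GNS space $L^2(P)$. Since the conditional expectation $P\to A$ is state-preserving and $w$ is a unitary in the centralizer (state-preservation of the universal extension by Proposition \ref{prop: hnn-univ-prop}, together with $a$ centralizing $\rho$), conjugating $F$ by unitaries in $C^*(a,w)\cap$ centralizer is harmless.

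The first step is a normal-form calculus for $P$. We need two ingredients, both obtained by the same induction on word length used in the proof of Proposition \ref{prop: hnn-univ-prop}. First, reduced words with different ``$w$-letter patterns'' (sequence $\varepsilon_1,\dots,\varepsilon_n$) are orthogonal, and words beginning with different first letters are orthogonal once the leading $A$-coefficients are orthogonal modulo the relevant $B_{\pm1}$. Second, an analogue of Fact \ref{fact:possible-terms}: for each reduced $x\in P\ominus\C$ there is $N$ such that $g^{-N}xg^{N}$, with $g=wa$ (or $a^{-1}(wa)^{-N}x(wa)^Na$), is a combination of reduced words that begin and end with a letter $w^{\pm1}$ preceded and followed by coefficients orthogonal to the relevant $B_{\pm1}$, together with powers of $g$.

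The hypothesis $a\in A\ominus B_1$ ensures that the words $(wa)^k$ are reduced. The hypothesis $a^*B_1a\perp B_{-1}$ is what prevents cancellation from propagating. Consider a piece like $w^{-1}a^* b\, a w$ with $b\in B_1$: after $w^{-1}(\cdot)w$ shifts things, the inner part $a^*\theta(\cdot)a$, or its relevant variant, lands in $A\ominus B_{-1}$ modulo scalars, so the resulting word is reduced. This mirrors the role of orthogonality of $a^*Ba$ and $B$ in the amalgamated case.

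Next, define $K_v$ for a reduced word $v$ as the closed span of reduced words beginning with $v$, with $P_v$ its projection. Choose three mutually ``incomparable'' prefixes, for instance $v_i=(wa)^i w^{-1}a\,(wa)^{3-i}$ or a similar variant using $w$ versus $w^{-1}$ to play the roles of $b$ and $c$. Then set $v_i^+=v_i\,wa$ and $u_i=v_i^+ (w^{-1}a)^{*}\ldots v_i^*$, chosen exactly as $u_i=w_i^+c^{-1}w_i^*$ before, so that $u_i$ maps $K_{v_i^+}$ into itself and $L^2(P)\ominus K_{v_i}$ into $K_{v_i^+}$. Orthogonality of the $P_i$ comes from the divergence of the $w^{\pm1}$ patterns. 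The condition $P_jxP_i=0$ follows from the normal form of conjugated $F$: an $x$ that begins with a letter pattern distinct from all $v_j$ pushes $K_{v_i}$ into words whose prefixes differ from $v_j$, while powers of $g$ either extend the prefix in the wrong direction or cancel into a word beginning with $a^*$.

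The main obstacle is the HNN normal form and the conjugation fact. Reductions in HNN words occur through $w b w^{-1}=\theta(b)$ and $w^{-1}\theta(b)w=b$, so one must track which of $B_1,B_{-1}$ is relevant at each letter. The choice of the three prefixes, and of whether to conjugate by $wa$ or $w^{-1}a$, must match the single available orthogonality $a^*B_1a\perp B_{-1}$. I would first try to verify this by transporting everything into the amalgamated free product $C$ via $w=uv$ and the bookkeeping of Lemma \ref{lem: amal-free-state-zero}. There, reduction again happens only through $R=B_1\otimes B_{-1}$, and the argument of Theorem \ref{thm: selfless-afp} can be adapted almost verbatim.
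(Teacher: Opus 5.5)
Your framework (PHP with $n=3$, normalizing $F$ by conjugation, prefix projections $P_{v_i^+}\le P_{v_i}$, and $u_i=v_i^+s\,v_i^{*}$) is the one the paper uses. What is missing is the step that actually carries the proof: a choice of $v_i,v_i^+,u_i$ compatible with the \emph{one-sided} hypotheses. Your explicit candidates fail, for three reasons.

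First, only $E_1(a)=0$ is assumed. So $w\,a\,w^{-1}$ is a reduced transition, but $w^{-1}a\,w$ is not, since it would need $E_{-1}(a)=0$. Your $v_i=(wa)^iw^{-1}a(wa)^{3-i}$ and $v_i^+=v_iwa$ contain $w^{-1}aw$. For instance, take $A=C^*_r(\mathbb{F}_2)$ with free generators $s,t$, $B_1=C^*(s)$, $B_{-1}=C^*(t)$, $\theta(s)=t$ and $a=t$. All hypotheses hold, yet $w^{-1}aw=s$, so the $w^{\pm1}$-patterns your orthogonality arguments rely on collapse.

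Second, even when these words happen to be reduced, the conjugator $g=wa$ together with prefixes beginning with $(wa)^i$ gives $g\,v_1=v_2\,wa$. Hence $P_2\,g\,P_1\neq 0$, although $g$ occurs among the normalized elements of $F$. In Theorem~\ref{thm: selfless-afp} this is avoided because powers of the conjugator begin with $c$ while the prefixes begin with $b$.

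Third, your fallback does not work. In $C$ the second factor is $R\rtimes\Z/2\Z$ with $(R\rtimes\Z/2\Z)\ominus R=Rv$. For unitaries $b=r_1v$, $c=r_2v$ one gets $c^*b=vr_2^*r_1v\in R$, a unitary, so $E(c^*b)=0$ is impossible. Theorem~\ref{thm: selfless-afp} therefore cannot be transported ``almost verbatim'', and in any case the $u_i$ must lie in $C^*(a,w)$.

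The paper's fix has three ingredients.
\begin{itemize}
\item The prefixes use only negative powers of $w$, distinguished by run length: $v_i=(w^{-2}a)^iw^{-1}a(w^{-2}a)^{3-i}$ and $v_i^+=v_iw^{-2}a$. These are reduced with no condition at all. Moreover $K_{v_i}\perp K_{v_j}$ reduces to comparing words beginning with $w^{-1}$ and with $aw^{-1}$, i.e.\ to $E_1(a)=0$.
\item $F$ is conjugated by $(aw)^N$, so its elements begin with $aw$ and end with $w^{-1}a^{-1}$, or are $(aw)^n$. Multiplying these onto $v_i$ creates only allowed transitions and yields words orthogonal to every $K_{v_j}$.
\item The role of $c^{-1}$ is played by the non-reduced unitary $(aw)^{-2}w(aw)^2$, i.e.\ $u_i=v_i^+(aw)^{-2}w(aw)^2v_i^{-1}$. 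Expanding it forces the splitting $a^*=(a^*-E_{-1}(a^*))+E_{-1}(a^*)$ of \eqref{eqn: reduction computation}. The resulting term $v_i^+w^{-1}\bigl(a^*\theta^{-1}(E_{-1}(a^*))a\bigr)w\cdots$ is reduced precisely because $a^*B_1a\perp B_{-1}$.
\end{itemize}
This last computation is where the orthogonality hypothesis enters the construction of the $u_i$. It is used both for $u_iK^+\subset K^+$ and for $u_i(L^2\ominus K)\subset K^+$. Conjugating $w$ by $aw$ alone would not do: it would leave the letter $a\,\theta^{-1}(E_{-1}(a^*))\,a$ between $w^{-1}$ and $w$, about which the hypotheses say nothing. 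Your proposal needs a construction of this kind; as written, the PHP verification is not carried out.
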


\begin{proof}
    Set $C = \mathrm{HNN}(A,\theta,B_1,B_{-1})$. Let $w \in C$ be the unitary such that $wxw^* = \theta(x)\in B_{-1}$ for all $x\in B_1.$ Recall from the above discussion that reduced words are of the form $x_0w^{\ee_1}x_1 \cdots x_{n-1}w^{\ee_{n}}x_n$ where $n\geq 1$, $\ee_i\in\{1,-1\},$ and whenever $1\leq i\leq n-1$ and $\ee_i\neq \ee_{i+1}$, $E_{\ee_i}(x_i)=0$; elements of $A\ominus \C1$ are also considered reduced words. Also, all reduced words have trace 0. We note that if two reduced words have different sequences of $w$'s and $w^{-1}$'s then they are orthogonal; if two reduced words start with $x_1w$ and $x_2w$ respectively and $E_{-1}(x_2^*x_1)=0$, then the two reduced words are orthogonal; if two reduced words start with $x_1w^{-1}$ and $x_2w^{-1}$ respectively and $E_{1}(x_2^*x_1)=0$, then the two reduced words are orthogonal. (These orthogonality facts can be seen using a combinatorial argument that shows the relevant products of words can all be written as linear combinations of reduced words; see also the $L^2$-space structure of the HNN extension as described in \cite[Section~3]{fima2012hnn}.)

    We will prove the PHP property. To that end, let $F\subset C\ominus \C1$ be a finite set of state zero elements; without loss of generality, we may assume $F$ consists of reduced words. Since $a^*B_1a\perp B_{-1}$, it follows that for $N$ sufficiently large, for all $x\in F$ $(aw)^Nx(aw)^{-N}$ is a linear combination of reduced words of the following forms:
    \begin{enumerate}
        \item starting with $aw$ and ending with $w^{-1}a^{-1}$;
        \item $(aw)^n$ for some $n\neq 0.$
    \end{enumerate}
    So without loss of generality, we may assume that $F$ consists of reduced words of the above two forms.

    For $i=1,2,3,$ we set $v_i = (w^{-2}a)^iw^{-1}a(w^{-2}a)^{3-i}$, $v_i^+ = v_i w^{-2}a$, and $u_i = v_i^+(aw)^{-2}w(aw)^2v_i^{-1}$. We note that $v_i$ and $v_i^+$ are reduced. For $u_i$, while it may not be reduced, it is a sum of two reduced words. Indeed,
    \begin{equation}\label{eqn: reduction computation}
    \begin{split}
        &u_i\\
        = &v_i^+w^{-1}a^\ast w^{-1}a^\ast wawawv_i^{-1}\\
        = &v_i^+w^{-1}a^\ast w^{-1}(a^\ast - E_{-1}(a^\ast))wawawv_i^{-1} + v_i^+w^{-1}a^\ast\theta^{-1}(E_{-1}(a^\ast))awawv_i^{-1}
    \end{split}
    \end{equation}
    and for the second term, we note that, as $a \in A \ominus B_1$, $\rho(\theta^{-1}(E_{-1}(a))) = 0$. Hence, as $a^\ast B_1a \perp B_{-1}$, we have $a^\ast\theta^{-1}(E_{-1}(a^\ast))a \perp B_{-1}$. From this, it follows that both terms in the above are indeed reduced.
    
    We also set, for a reduced word $y,$ $K_y$ to be the span of reduced words starting with $y$, and $P_y$ the orthogonal projection onto $L^2(K_y)$. We now verify the three conditions for PHP, using the unitaries $u_i$ and the projections $P_{v_i^+} \leq P_{v_i}$.

    For (1), it suffices to check that for $\eta,\eta'$ such that $v_i\eta$ and $v_j\eta'$ are reduced that $v_i\eta$ and $v_j\eta'$ are orthogonal. Without loss of generality, $i>j,$ and since orthogonality is preserved by unitary transformations, it suffices to show that $w^{-1}a(w^{-2}a)^{i-j-1}w^{-1}a(w^{-2}a)^{3-i}\eta$ and $a(w^{-2}a)^{3-j}\eta'$ are orthogonal. But $E_1(a)=0,$ so these are indeed orthogonal.

    For (2), fix $i$ and set $v=v_i,$ $v^+ = v_i^+$, $u = u_i$, $K = K_{v}$, and $K^+ = K_{v^+}$. We must check both that $uK^+\subset K^+$ and $u(L^2(C)\ominus K) \subset K^+$. First, consider a reduced $v^+\eta = v w^{-2}a \eta \in K^+$. Since this is reduced, either the first power of $w$ in $\eta$ is negative or the leading term $x_0\in A$ satisfies $E_{-1}(ax_0) = 0$. Then $uv^+\eta = v^+ (aw)^{-2}wawaw^{-1}a\eta$. By the same computation as in Equation (\ref{eqn: reduction computation}) and using $a \in A \ominus B_1$ so $waw^{-1}$ is reduced, we see that $uv^+\eta$ is a sum of two reduced words starting with $v^+$, and so is in $K^+$. Now let $\xi \in L^2(C)\ominus K$. Let $\xi'$ be in the span of $A$ and reduced words starting with $Aw^{-1}$. Then $v\xi'$ is reduced and starts with $v$. Thus, $\langle \xi', v^*\xi\rangle = \langle v\xi',\xi\rangle = 0$. Therefore it must be that $v^*\xi = v^{-1}\xi$ is a linear combination of reduced words starting with $Aw.$ Now, $u\xi = v^+(aw)^{-2}w(aw)^2v^{-1}\xi$. So, again by the same computation as in Equation (\ref{eqn: reduction computation}) and using that $v^{-1}\xi$ is a linear combination of reduced words starting with $Aw,$ we see that $u\xi$ is a linear combination of reduced words starting with $v^+$, and so is in $K^+$.

    Lastly, for (3) it suffices to show that $xv_i\eta$ is orthogonal to $v_j\eta'$ for all $x\in F$, $i,j$, and $\eta,\eta'$ such that $v_i\eta$ and $v_j\eta'$ are reduced. If $x$ starts with $aw$ and ends with $w^{-1}a^{-1}$, then $xv_i\eta$ is reduced and its first power of $w$ is positive, while in $v_j\eta'$ the first power of $w$ is negative, so they are orthogonal. If $x = (aw)^n$ for $n<0,$ again $xv_i\eta$ is reduced. Left multiplying both $xv_i\eta$ and $v_j\eta'$ by $w$ gives on the one hand a reduced word starting with $a^{-1}w^{-1}$ and on the other hand a reduced word starting with $w^{-1}$. Since $E_{1}(a) = 0$, these are orthogonal. If $x = (aw)^n$ for $n\ge2$, then $xv_i\eta$ reduces to $(aw)^{n-1}aw^{-1}a(w^{-2}a)^{i-1}w^{-1}a(w^{-2}a)^{3-i}\eta$, which is reduced since $E_1(a) = 0$. This word's first power of $w$ is positive and so is orthogonal to $v_j\eta'$. Lastly, if $x = aw$ then $xv_i\eta$ reduces to $aw^{-1}a(w^{-2}a)^{i-1}w^{-1}a(w^{-2}a)^{3-i}\eta$ which is again orthogonal to $v_j\eta'$ since $E_1(a)=0.$

    We now deduce that the inclusion $C^*(a,w)\subset C$ is selfless, so that in particular the HNN extension $C$ is selfless.
\end{proof}

\begin{remark}
    We can form a ``graph of $C^*$-algebras'' in the same sense as a graph of groups \cite{Ser80} via iterated amalgamated free products and HNN extensions, and also obtain selflessness in such settings. We omit commenting more on this to keep the article concise. 
\end{remark}
    
\bibliographystyle{alpha}
	\bibliography{references}

@article {MO15,
    AUTHOR = {Minasyan, Ashot and Osin, D. V.},
     TITLE = {Acylindrical hyperbolicity of groups acting on trees},
   JOURNAL = {Math. Ann.},
    VOLUME = {362},
      YEAR = {2015},
     PAGES = {1055--1105},
}

@article{Popa1993,
  author  = {Popa, Sorin},
  title   = {Markov traces on universal {J}ones algebras and subfactors of finite index},
  fjournal = {Inventiones mathematicae},
  journal={Invent. Math.},
  year    = {1993},
  volume  = {111},
  number  = {2},
  pages   = {375--405},
  doi     = {10.1007/BF01231293}}

@article {IPP08,
    AUTHOR = {Ioana, Adrian and Peterson, Jesse and Popa, Sorin},
     TITLE = {Amalgamated free products of weakly rigid factors and
              calculation of their symmetry groups},
   JOURNAL = {Acta Math.},
  FJOURNAL = {Acta Mathematica},
    VOLUME = {200},
      YEAR = {2008},
    NUMBER = {1},
     PAGES = {85--153},
      ISSN = {0001-5962},
   MRCLASS = {46L10 (20E06 22D25 37A15 37A20 46L09)},
  MRNUMBER = {2386109},
MRREVIEWER = {Alain Valette},
       DOI = {10.1007/s11511-008-0024-5},
       URL = {https://doi-org.proxy.library.vanderbilt.edu/10.1007/s11511-008-0024-5},
}

@article{higman1961subgroups,
  title={Subgroups of finitely presented groups},
  author={Higman, Graham},
  fjournal={Proceedings of the Royal Society of London. Series A. Mathematical and Physical Sciences},
  journal={Proc. Roy. Soc. Lond. Ser. A. Math. Phys. Sci.},
  volume={262},
  number={1311},
  pages={455--475},
  year={1961},
  publisher={The Royal Society London}
}

@book{speicher1998combinatorial,
  title={Combinatorial Theory of the Free Product with Amalgamation and Operator-Valued Free Probability Theory},
  author={Speicher, Roland},
  fseries={Memoirs of the American Mathematical Society},
  series={Mem. Amer. Math. Soc.},
  volume={132},
  number={627},
  year={1998},
  publisher={American Mathematical Society},
  address={Providence, R.I.},
  isbn={978-0-8218-0693-7}
}

@article {KhinRX, AUTHOR = {Ricard, \'{E}ric and Xu, Quanhua}, TITLE = {Khintchine type inequalities for reduced free products and applications}, JOURNAL = {J. Reine Angew. Math.}, FJOURNAL = {Journal f\"{u}r die Reine und Angewandte Mathematik. [Crelle's Journal]}, VOLUME = {599}, YEAR = {2006}, PAGES = {27--59}, ISSN = {0075-4102}, MRCLASS = {46L09 (46L05 46L10)}, MRNUMBER = {2279097}, MRREVIEWER = {Maria Grazia Viola}, DOI = {10.1515/CRELLE.2006.077}, URL = {https://doi.org/10.1515/CRELLE.2006.077}, }

@article {stablerankHaagerup,
    AUTHOR = {Dykema, Kenneth J. and Haagerup, Uffe and R{\o}rdam, Mikael},
     TITLE = {The stable rank of some free product {$C^\ast$}-algebras},
   JOURNAL = {Duke Math. J.},
  FJOURNAL = {Duke Mathematical Journal},
    VOLUME = {90},
      YEAR = {1997},
    NUMBER = {1},
     PAGES = {95--121},
      ISSN = {0012-7094,1547-7398},
   MRCLASS = {46L05 (19B10 19K56 46L35 46L80)},
  MRNUMBER = {1478545},
MRREVIEWER = {Gustavo\ Corach},
       DOI = {10.1215/S0012-7094-97-09004-9},
       URL = {https://doi.org/10.1215/S0012-7094-97-09004-9},
}

@article{ivanov2007structurereducedamalgamatedfree,
  title={On the structure of some reduced amalgamated free product {$C^*$}-algebras},
  author={Ivanov, Nikolay A},
  fjournal={International Journal of Mathematics},
  journal={Internat. J. Math.},
  volume={22},
  number={02},
  pages={281--306},
  year={2011},
  publisher={World Scientific}
}

@article{IvanovOmland2017,
  author  = {Ivanov, Nikolay A. and Omland, Tron},
  title   = {{$C^*$}-simplicity of free products with amalgamation and radical classes of groups},
  fjournal = {Journal of Functional Analysis},
  journal={J. Funct. Anal.},
  year    = {2017},
  volume  = {272},
  number  = {9},
  pages   = {3712--3741},
  doi     = {10.1016/j.jfa.2017.01.011},
  url     = {https://doi.org/10.48550/arXiv.1605.06395}
}

@article{avni2025mixedidentitieslineargroups,
      title={Mixed identities in linear groups -- effective version}, 
      author={Nir Avni and Tsachik Gelander},
      year={2025},
      eprint={2510.03492},
      archivePrefix={arXiv},
      primaryClass={math.GR},
      url={https://arxiv.org/abs/2510.03492}, 
      journal={arXiv:2510.03492},
}

@article{vigdorovich2026selflessreducedcalgebraslinear,
      title={Selfless reduced {$C^*$}-algebras of linear groups}, 
      author={Itamar Vigdorovich},
      year={2026},
      eprint={2602.10616},
      archivePrefix={arXiv},
      primaryClass={math.OA},
      url={https://arxiv.org/abs/2602.10616}, 
      journal={arXiv:2602.10616},
}

@article{flores2026purenessstablerankreduced,
      title={Pureness and stable rank one for reduced twisted group {$C^\ast$}-algebras of certain group extensions}, 
      author={Flores, Felipe and Klisse, Mario and {\'O} Cobhthaigh, M{\'i}che{\'a}l and Pagliero, Matteo},
      year={2026},
      eprint={2601.19758},
      archivePrefix={arXiv},
      primaryClass={math.OA},
      url={https://arxiv.org/abs/2601.19758}, 
      journal={arXiv:2601.19758},
}

@inproceedings{Blackadar_1989, 
place={Cambridge}, 
series={London Math. Soc. Lecture Note Ser.},
fseries={London Mathematical Society Lecture Note Series}, 
title={Comparison theory for simple {$C^\ast$}-algebras}, 
booktitle={Operator Algebras and Applications}, 
publisher={Cambridge University Press}, 
author={Blackadar, Bruce}, 
editor={Evans, David E. and Takesaki, Masamichi}, 
year={1989}, 
pages={21--54}, 
collection={London Mathematical Society Lecture Note Series}}

@article{Bryder_Ivanov_Omland_2020,
  author    = {Bryder, Rasmus Sylvester and Ivanov, Nikolay A. and Omland, Tron},
  title     = {{$C^*$}-simplicity of {HNN~extensions} and groups~acting~on~trees},
  fjournal   = {Annales de l'Institut Fourier},
  journal={Ann. Inst. Fourier},
  volume    = {70},
  number    = {4},
  pages     = {1497--1543},
  year      = {2020},
  doi       = {10.5802/aif.3378},
  publisher = {Association des Annales de l'Institut Fourier},
  url       = {https://aif.centre-mersenne.org}
}

@article{HarpePreaux2011,
  title     = {{$C^*$}-simple groups: amalgamated free products, {HNN} extensions, and fundamental groups of 3-manifolds},
  author    = {de la Harpe, Pierre and Pr{\'e}aux, Jean-Philippe},
  fjournal   = {Journal of Topology and Analysis},
  journal={J. Topol. Anal.},
  volume    = {3},
  number    = {2},
  pages     = {125--158},
  year      = {2011},
  publisher = {World Scientific},
  doi       = {10.1142/S179352531100054X},
  eprint    = {0909.3528},
  archivePrefix = {arXiv},
  primaryClass  = {math.GR}
}

@article{McClanahan1994,
  author    = {McClanahan, Kevin},
  title     = {Simplicity of Reduced Amalgamated Products of {$C^*$}-Algebras},
  fjournal   = {Canadian Journal of Mathematics},
  journal={Canad. J. Math.},
  volume    = {46},
  number    = {4},
  pages     = {793--814},
  year      = {1994},
  doi       = {10.4153/CJM-1994-041-x},
  publisher = {Cambridge University Press}
}

@article {Dykemasimplicitystablerankonefree,
    AUTHOR = {Dykema, Kenneth J.},
     TITLE = {Simplicity and the stable rank of some free product
              {$C^*$}-algebras},
   JOURNAL = {Trans. Amer. Math. Soc.},
  FJOURNAL = {Transactions of the American Mathematical Society},
    VOLUME = {351},
      YEAR = {1999},
    NUMBER = {1},
     PAGES = {1--40},
      ISSN = {0002-9947,1088-6850},
   MRCLASS = {46L05 (19B10 19K56 46L35 46L80)},
  MRNUMBER = {1473439},
MRREVIEWER = {Gustavo\ Corach},
       DOI = {10.1090/S0002-9947-99-02180-7},
       URL = {https://doi.org/10.1090/S0002-9947-99-02180-7},
}

@article{dykema1998projections,
  title={Projections in free product {$C^\ast$}-algebras},
  author={Dykema, Kenneth J. and R{\o}rdam, Mikael},
  journal={Geom. Funct. Anal.},
  fjournal={Geometric \& Functional Analysis GAFA},
  volume={8},
  number={1},
  pages={1--16},
  year={1998},
  publisher={Springer}
}

@article{Ueda2008Remarks,
  author    = {Ueda, Yoshimichi},
  title     = {Remarks on {HNN} extensions in operator algebras},
  fjournal   = {Illinois Journal of Mathematics},
  journal={Illinois J. Math.},
  volume    = {52},
  number    = {3},
  pages     = {705--725},
  year      = {2008},
  doi       = {10.1215/ijm/1254143997}}

@article{Ueda2005HNN,
  author    = {Ueda, Yoshimichi},
  title     = {{HNN} extensions of von {N}eumann algebras},
  fjournal   = {Journal of Functional Analysis},
  journal={J. Funct. Anal.},
  volume    = {225},
  number    = {2},
  pages     = {383--426},
  year      = {2005},
  doi       = {10.1016/j.jfa.2005.01.004},
  url       = {https://doi.org/10.1016/j.jfa.2005.01.004}
}

@article{exoticCIKE,
	author = {Chifan, Ionu{\c t} and Ioana, Adrian and Kunnawalkam Elayavalli, Srivatsav},
	date = {2023/06/17},
	doi = {10.1007/s00039-023-00649-4},
	id = {Chifan2023},
	isbn = {1420-8970},
	fjournal = {Geometric and Functional Analysis},
    journal={Geom. Funct. Anal.},
	title = {An exotic {II}$_1$ factor without property {G}amma},
	url = {https://doi.org/10.1007/s00039-023-00649-4},
	year = {2023},
    volume={33},
  number={5},
  pages={1243--1265},}

@article {CartanAFP,
    AUTHOR = {Ioana, Adrian},
     TITLE = {Cartan subalgebras of amalgamated free product {${\rm II}_1$}
              factors},
      NOTE = {With an appendix by Ioana and Stefaan Vaes},
   JOURNAL = {Ann. Sci. \'{E}c. Norm. Sup\'{e}r. (4)},
  FJOURNAL = {Annales Scientifiques de l'\'{E}cole Normale Sup\'{e}rieure. Quatri\`eme
              S\'{e}rie},
    VOLUME = {48},
      YEAR = {2015},
    NUMBER = {1},
     PAGES = {71--130},
      ISSN = {0012-9593},
   MRCLASS = {46L36 (28D15 37A20 46L10 60B15)},
  MRNUMBER = {3335839},
MRREVIEWER = {Sven Raum},
       DOI = {10.24033/asens.2239},
       URL = {https://doi.org/10.24033/asens.2239},
}

@article {mei,
    AUTHOR = {Mei, Tao and Ricard, \'{E}ric},
     TITLE = {Free {H}ilbert transforms},
   JOURNAL = {Duke Math. J.},
  FJOURNAL = {Duke Mathematical Journal},
    VOLUME = {166},
      YEAR = {2017},
    NUMBER = {11},
     PAGES = {2153--2182},
      ISSN = {0012-7094,1547-7398},
   MRCLASS = {46L07 (46L52 46L54)},
  MRNUMBER = {3694567},
MRREVIEWER = {Daniele\ Puglisi},
       DOI = {10.1215/00127094-2017-0007},
       URL = {https://doi.org/10.1215/00127094-2017-0007},
}

@incollection{voiculescu1985symmetries,
  author    = {Voiculescu, Dan},
  title     = {Symmetries of some reduced free product {$C^*$}-algebras},
  booktitle = {Operator Algebras and Their Connections with Topology and Ergodic Theory},
  series    = {Lecture Notes in Math.},
  volume    = {1132},
  pages     = {556--588},
  year      = {1985},
  publisher = {Springer-Verlag},
  address   = {Berlin, Heidelberg},
  doi       = {10.1007/BFb0074909}
}

@book{voiculescu1992free,
  title={Free Random Variables: A Noncommutative Probability Approach to Free Products with Applications to Random Matrices, Operator Algebras, and Harmonic Analysis on Free Groups},
  author={Voiculescu, Dan-Virgil and Dykema, Kenneth J. and Nica, Alexandru},
  series={CRM Monogr. Ser.},
  volume={1},
  year={1992},
  publisher={Amer. Math. Soc.},
  address={Providence, RI},
  isbn={978-0821869994}
}

@article{HNNpaper,
author = {Higman, Graham and Neumann, B. H. and Neumann, Hanna},
title = {Embedding Theorems for Groups},
fjournal = {Journal of the London Mathematical Society},
journal={J. Lond. Math. Soc.},
volume = {s1-24},
number = {4},
pages = {247-254},
doi = {https://doi.org/10.1112/jlms/s1-24.4.247},
url = {https://londmathsoc.onlinelibrary.wiley.com/doi/abs/10.1112/jlms/s1-24.4.247},
eprint = {https://londmathsoc.onlinelibrary.wiley.com/doi/pdf/10.1112/jlms/s1-24.4.247},
year = {1949}
}

@article{elayavalli2025remarks,
  title         = {Some remarks on decay in countable groups and amalgamated free products}, 
  author        = {Kunnawalkam Elayavalli, Srivatsav  and Gregory Patchell and Lizzy Teryoshin},
  year          = {2025},
  journal        = {arXiv:2509.08754}}

@article{BradfordSisto2026,
  author  = {Bradford, Henry and Sisto, Alessandro},
  title   = {Non-solutions to mixed equations in acylindrically hyperbolic groups coming from random walks},
  fjournal = {Archiv der Mathematik},
  journal={Arch. Math.},
  year    = {2026},
  volume  = {126},
  number  = {3},
  pages   = {223--234},
  doi     = {10.1007/s00013-026-02223-4}}

@article{yang2025extreme,
  title         = {An extreme boundary of acylindrically hyperbolic groups},
  author        = {Wenyuan Yang},
  year          = {2025},
  eprint        = {2511.16400},
  archivePrefix = {arXiv},
  primaryClass  = {math.GR},
  journal={arXiv:2511.16400},}

@article{hayes2025selfless,
  title={Selfless Inclusions of {$C^\ast$}-Algebras},
  author={Hayes, Ben and Kunnawalkam Elayavalli, Srivatsav  and Patchell, Gregory and Robert, Leonel},
  journal={arXiv:2510.13398},
  year={2025}
}

@book {Ser80,
    AUTHOR = {Serre, Jean-Pierre},
     TITLE = {Trees},
      NOTE = {Translated from the French by John Stillwell},
 PUBLISHER = {Springer-Verlag, Berlin-New York},
      YEAR = {1980},
     PAGES = {ix+142},
      ISBN = {3-540-10103-9},
   MRCLASS = {20H10 (05C05 22E50)},
  MRNUMBER = {607504},
}

@article{ozawa2025proximalityselflessnessgroupcalgebras,
      title={Proximality and selflessness for group {$C^*$}-algebras}, 
      author={Narutaka Ozawa},
      year={2025},
      eprint={2508.07938},
      archivePrefix={arXiv},
      primaryClass={math.OA},
      url={https://arxiv.org/abs/2508.07938},
journal={arXiv:2508.07938},
}

@article{raum2025strictcomparisontwistedgroup,
      title={Strict comparison for twisted group {$C^*$}-algebras}, 
      author={Sven Raum and Hannes Thiel and Eduard Vilalta},
      journal={arXiv:2505.18569},
      year={2025},
      eprint={2505.18569},
      archivePrefix={arXiv},
      primaryClass={math.OA},
      url={https://arxiv.org/abs/2505.18569}, 
}

@article{elayavalli2025negativeresolutioncalgebraictarski,
      title={Negative resolution to the {$C^*$}-algebraic {Tarski} problem}, 
      author={Kunnawalkam Elayavalli, Srivatsav and  Schafhauser, Christopher},
      year={2025},
      journal={arXiv:2503.10505}
}

@article {robertselfless,
    AUTHOR = {Robert, Leonel},
     TITLE = {Selfless {$C^*$}-algebras},
   JOURNAL = {Adv. Math.},
  FJOURNAL = {Advances in Mathematics},
    VOLUME = {478},
      YEAR = {2025},
     PAGES = {Paper No. 110409, 28},
      ISSN = {0001-8708,1090-2082},
   MRCLASS = {46L05 (46L35 46L54)},
  MRNUMBER = {4924062},
       DOI = {10.1016/j.aim.2025.110409},
       URL = {https://doi.org/10.1016/j.aim.2025.110409},
}

@article{vigdorovich2025structuralpropertiesreducedcalgebras,
      title={Structural properties of reduced {$C^*$}-algebras associated with higher-rank lattices}, 
      author={Itamar Vigdorovich},
      year={2025},
      journal={arXiv:2503.12737}
}

@article {Avitzour,
    AUTHOR = {Avitzour, Daniel},
     TITLE = {Free products of {$C\sp{\ast} $}-algebras},
   JOURNAL = {Trans. Amer. Math. Soc.},
  FJOURNAL = {Transactions of the American Mathematical Society},
    VOLUME = {271},
      YEAR = {1982},
    NUMBER = {2},
     PAGES = {423--435},
      ISSN = {0002-9947},
   MRCLASS = {46L05},
  MRNUMBER = {654842},
MRREVIEWER = {William Paschke},
       DOI = {10.2307/1998890},
       URL = {https://doi.org/10.2307/1998890},
}

@article{amrutam2025strictcomparisonreducedgroup,
  title={Strict comparison in reduced group {$C^*$}-algebras},
  author={Amrutam, Tattwamasi and Gao, David and Kunnawalkam Elayavalli, Srivatsav and Patchell, Gregory},
  journal={Invent. Math.},
  volume={242},
  number={3},
  pages={639--657},
  year={2025},
  publisher={Springer}
}

@article{HKER,
      title={Selfless reduced free product {$C^*$}-algebras}, 
      author={Hayes,Ben and  Kunnawalkam Elayavalli, Srivatsav and Robert, Leonel},
      year={2025},
      journal={arXiv:2505.13265},
      eprint={2505.13265},
      archivePrefix={arXiv},
      primaryClass={math.OA},
      url={https://arxiv.org/abs/2505.13265}, 
}

@article {fmmm2025selflessfreeprod,
    AUTHOR = {Flores, Felipe and Klisse, Mario and {\'O} Cobhthaigh, M{\'i}che{\'a}l and Pagliero, Matteo},
     TITLE = {Selfless reduced free products and graph products of {$C^*$}-algebras},
   journal={arXiv:2510.24675},
  year={2025}
}

@article{fima2012hnn,
  title={{HNN} extensions and unique group measure space decomposition of {II$_1$} factors},
  author={Fima, Pierre and Vaes, Stefaan},
  fjournal={Transactions of the American Mathematical Society},
  journal={Trans. Amer. Math. Soc.},
  volume={364},
  number={5},
  pages={2601--2617},
  year={2012},
  publisher={American Mathematical Society},
  doi={10.1090/S0002-9947-2011-05511-7}
}

@article{gao2025conjugacy,
  title={On conjugacy and perturbation of subalgebras},
  author={Gao, David and Kunnawalkam Elayavalli, Srivatsav and Patchell, Gregory and Tan, Hui},
  fjournal={Journal of Noncommutative Geometry},
  journal={J. Noncommut. Geom.},
  year={2025},
  publisher={EMS Press},
  doi={10.4171/JNCG/627}
}

\end{document}